
\documentclass[preprint,12pt]{elsarticle}

\usepackage{amsmath, amsthm, amssymb}

\newtheorem{thm}{Theorem}[section]
\newtheorem{cor}[thm]{Corollary}
\newtheorem{lem}[thm]{Lemma}

\newtheorem{ques}[thm]{Open Question}

\theoremstyle{remark}

\newtheorem{expl}[thm]{Example}

\theoremstyle{definition}
\newtheorem{defn}[thm]{Definition}

\newcommand{\dom}{\mathrm{dom} }
\newcommand{\tp}{\mathrm{tp} }
\newcommand{\qftp}{\mathrm{qftp} }

\newcommand{\ID}{\mathrm{ID} }

\newcommand{\leng}{\mathrm{lg} }

\newcommand{\nlhd}{\not\hspace{-5pt}\lhd\hspace{4pt} }
\newcommand{\Maj}{\mathrm{Maj} }
\newcommand{\Lev}{\mathrm{Lev} }

\journal{Annals of Pure and Applied Logic}

\begin{document}

\begin{frontmatter}

\title{Definability of types over finite partial order indiscernibles}

\author{Vincent Guingona}
\address{University of Notre Dame \\ Department of Mathematics \\ 255 Hurley, Notre Dame, IN 46556}
\ead{guingona.1@nd.edu}
\ead[url]{http://www.nd.edu/~vguingon/}

\begin{abstract}
 In this paper, we show that a partitioned formula $\varphi$ is dependent if and only if $\varphi$ has uniform definability of types over finite partial order indiscernibles.  This generalizes our result from a previous paper \cite{Mypaper2}.  We show this by giving a decomposition of the truth values of an externally definable formula on a finite partial order indiscernible.
\end{abstract}

\begin{keyword}
 definability \sep types \sep dependent \sep NIP \sep generalized indiscernibles
 
 \MSC[2010] 03C45
\end{keyword}

\end{frontmatter}

\section{Introduction}\label{Section_Intro}


In \cite{Mypaper2}, we introduce the notion of uniform definability of types over finite sets (UDTFS) and conjecture that all dependent formulas have UDTFS (we call this the UDTFS Conjecture).  In that paper, we approach a solution to the conjecture from two distinct directions.  First, we take a subclass of the class of dependent theories and show that this subclass has UDTFS; namely, we show that all dp-minimal theories have UDTFS.  We hope to show this for larger subclasses in future papers.  Our second approach involves slightly weakening the definition of UDTFS.  In the first section of \cite{Mypaper2}, we actually give a characterization of dependent formulas in terms of definability of types.  Theorem 1.2 (ii) of \cite{Mypaper2} states that a formula is dependent if and only if it has uniform definability of types over finite indiscernible sequences.

Indiscernible sequences are very strong and well behaved in the context of dependent theories, so this result is not too surprising.  On the other hand, as one continues to weaken the assumption of ``indiscernible sequence,'' one gets closer to solving the UDTFS Conjecture.  In this paper, we generalize Theorem 1.2 (ii) of \cite{Mypaper2} using generalized indiscernible sequences.  We prove the following theorem:

\begin{thm}\label{Thm_UDTFIPOS}
 The following are equivalent for a partitioned formula $\varphi(\overline{x}; \overline{y})$:
 \begin{itemize}
  \item [(i)] $\varphi$ is dependent;
  \item [(ii)] there exists a formula $\psi(\overline{y}; \overline{z}_0, ..., \overline{z}_{n-1})$ such that, for all finite partial orders $(P; \unlhd)$, all (generalized) indiscernibles $\langle \overline{b}_i : i \in P \rangle$ (see Definition \ref{Defn_GenIndiscSequence} below), and all types $p \in S_\varphi( \{ \overline{b}_i : i \in P \} )$, there exists $i_0, ..., i_{n-1} \in P$ so that, for all $i \in P$, $\varphi(\overline{x}; \overline{b}_i) \in p(\overline{x})$ if and only if $\models \psi(\overline{b}_i; \overline{b}_{i_0}, ..., \overline{b}_{i_{n-1}})$.
 \end{itemize}
\end{thm}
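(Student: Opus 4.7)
The direction (ii) $\Rightarrow$ (i) is essentially free: a finite linear order is a finite partial order, so the hypothesis of (ii) restricted to linearly ordered $P$ gives uniform definability of $\varphi$-types over finite indiscernible sequences, and the cited Theorem 1.2(ii) of \cite{Mypaper2} then yields dependence of $\varphi$. All the content is in proving (i) $\Rightarrow$ (ii).

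For the forward direction, fix a dependent $\varphi$, a finite partial order $(P, \unlhd)$, a $P$-indiscernible $\langle \overline{b}_i : i \in P \rangle$, and a type $p \in S_\varphi(\{\overline{b}_i : i \in P\})$. Let $\chi_p : P \to \{0,1\}$ record which indices $i$ have $\varphi(\overline{x}; \overline{b}_i) \in p$; the task is to recover $\chi_p$ from a fixed formula and a bounded list of witnesses $i_0, \ldots, i_{n-1} \in P$, where both $\psi$ and $n$ depend only on $\varphi$. Partial order indiscernibility supplies two structural handles. First, any chain $C \subseteq P$ carries an ordinary (linear-order) indiscernible sequence, so by Theorem 1.2(ii) of \cite{Mypaper2}, $\chi_p \restrict{C}$ is describable by the known linear-order formula applied to boundedly many witnesses inside $C$; in particular the alternation of $\chi_p$ along any chain is bounded by an invariant of $\varphi$ alone. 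Second, any antichain $A \subseteq P$ yields an indiscernible set (pairwise incomparable index tuples all share the same q.f.\ partial-order type), and dependence of $\varphi$ then forces $\chi_p \restrict{A}$ to be constant off a subset of size at most $k(\varphi)$. Together, these pin down the combinatorial shape of $\{i \in P : \chi_p(i) = 1\}$: chain-wise a bounded union of monochromatic intervals, antichain-wise almost constant.

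The formula $\psi(\overline{y}; \overline{z}_0, \ldots, \overline{z}_{n-1})$ will then be assembled as a Boolean combination that first decides the q.f.\ partial-order type of $\overline{y}$ relative to $\overline{z}_0, \ldots, \overline{z}_{n-1}$ (using that, by partial order indiscernibility, each of the relations $\unlhd$, $=$, and incomparability on indices is detected by a fixed $L$-formula in the tuples $\overline{b}_i, \overline{b}_{i_j}$) and then applies the chain/antichain case analysis produced by the decomposition; the witnesses $i_0, \ldots, i_{n-1}$ are selected as the endpoints of monochromatic runs along the appropriate chains together with the exceptional points on antichains. The hard part, and the main obstacle, will be to carry out this assembly so that the bound $n$ depends only on $\varphi$ and not on the partial order $P$: a naive Dilworth- or Mirsky-style partition produces a number of pieces growing with the width or height of $P$, which would be fatal. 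The argument must therefore use dependence of $\varphi$ more globally — presumably via a Ramsey-type extraction combined with bounded alternation — to show that boundedly many well-chosen witnesses suffice and that the resulting reconstruction is captured by a single first-order formula $\psi$ uniformly across all finite partial orders. This uniform decomposition of the externally definable function $\chi_p$ on a finite partial order indiscernible is exactly the technical device flagged in the abstract.
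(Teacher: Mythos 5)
Your (ii) $\Rightarrow$ (i) argument is correct, though the paper uses a self-contained type-counting argument (counting $\varphi$-types over a $\varphi$-independent indiscernible set of size $m$ against the $L \cdot m^K$ types a fixed definition scheme can produce) rather than re-invoking Theorem 1.2(ii) of \cite{Mypaper2}; either route works. For (i) $\Rightarrow$ (ii), you have identified the correct ingredients and, commendably, the correct central obstacle, but you have not resolved it, and one step as written is actually wrong. The decomposition the paper uses (Lemma \ref{Lem_IndiscBreakdown} and Theorem \ref{Thm_PosetDefinability}) is not a naive Dilworth or Mirsky partition of $P$: applying Lemma \ref{Lem_IndiscChainAntiChains}, the truth pattern of $\varphi(\overline{a};\overline{b}_i)$ is an $N$-indiscernible coloring, and one first extracts a sequence of at most $2N+2$ maximal antichains $A_0 \lhd \dots \lhd A_{K-1}$ with alternating majority colors so that within each interval $[A_{n-1},A_n)$ every antichain has minority color of size at most $M=(2N+1)(N+1)$; only then does Dilworth apply, yielding at most $M$ chains per interval. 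That alternating-parity construction is what makes the total number of pieces depend on $N$ alone rather than on the width or height of $P$ --- it is exactly the uniformity your ``Ramsey-type extraction'' gestures at without delivering.

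Separately, your proposal to have $\psi$ ``decide the q.f.\ partial-order type of $\overline{y}$ relative to $\overline{z}_0,\dots,\overline{z}_{n-1}$'' presupposes that $\unlhd$ and incomparability are recoverable from the tuples $\overline{b}_i$ by a fixed $L$-formula. That is false in general: when the indiscernible is order-insensitive --- in particular when every chain is homogeneous in the sense of Definition \ref{Defn_Homogeneous} --- no $L$-formula distinguishes $i\lhd j$ from $j \lhd i$ from incomparability. The paper therefore never reconstructs the partial order. Instead it works with ``rough definability'' of chains and antichains and splits into Case 1 (chains homogeneous) and Case 2 (chains not homogeneous), with Lemmas \ref{Lem_DefiningAntichain}, \ref{Lem_DefiningChain}, \ref{Lem_DefiningChain2}, and \ref{Lem_DefiningAntichain2} producing formulas true on a given piece and false off $P^t$, using only boundedly many parameters even though the pieces themselves may be arbitrarily large (this relies on Lemmas \ref{Lem_ChainFix} and \ref{Lem_AntichainFix} to compress long chains to unions of $\le N+1$ intervals and large antichains to barriers of size $\le 3N+1$). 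Neither this case split nor the rough-definability mechanism appears, even in sketch, in your assembly step, and without them the assembly does not go through.
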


That is, we show that a formula $\varphi$ is dependent if and only if it has uniform definability of types over finite partial order indiscernibles.  The notion of generalized indiscernibles is first introduced in Chapter VII of \cite{Shelah}.  As in the work of Scow \cite{Scow}, this paper characterizes dependence in terms of generalized indiscernible sequences.  However, in this paper, we use partial order indiscernibles instead of ordered graph indiscernibles.

If we can push this to its natural conclusion, we could solve the UDTFS Conjecture.  For example, suppose that $\varphi$ has independence dimension $\le n$ and we took as our index language $S = \{ P_\eta : \eta \in {}^{n+1} 2 \}$ for $(n+1)$-ary predicates $P_\eta$.  Then $\varphi$ has UDTFS if and only if it has uniform definability of types over finite $S$-structure indiscernibles.  Thus, we view Theorem \ref{Thm_UDTFIPOS} as a definite step toward solving the UDTFS Conjecture.

For this paper, a ``formula'' will mean a $\emptyset$-definable formula in a fixed language $L$ unless otherwise specified.  If $\theta(\overline{x})$ is a formula, then let me denote $\theta(\overline{x})^0 = \neg \theta(\overline{x})$ and $\theta(\overline{x})^1 = \theta(\overline{x})$.  We will be working in a complete, first-order theory $T$ in a fixed language $L$ with monster model $\mathfrak{C}$.  Fix $M \models T$ (so $M \preceq \mathfrak{C}$) and a partitioned $L$-formula $\varphi(\overline{x}; \overline{y})$.  By $\varphi(M; \overline{b})$ for some $\overline{b} \in \mathfrak{C}^{\leng(\overline{y})}$, we mean the following subset of $M^{\leng(\overline{x})}$:
\[
 \varphi(M; \overline{b}) = \{ \overline{a} \in M^{\leng(\overline{x})} : \models \varphi(\overline{a}; \overline{b}) \}.
\]
We will say that a set $B \subseteq \mathfrak{C}^{\leng(\overline{y})}$ is $\varphi$-\textit{independent} if, for any map $s : B \rightarrow 2$, the set of formulas $\{ \varphi(\overline{x}; \overline{b})^{s(\overline{b})} : \overline{b} \in B \}$ is consistent.  We will say that $\varphi$ has \textit{independence dimension} $N < \omega$, which we will denote by $\ID(\varphi) = N$, if $N$ is maximal such that there exists $B \subseteq \mathfrak{C}^{\leng(\overline{y})}$ with $|B| = N$ where $B$ is $\varphi$-independent.  We will say that $\varphi$ is dependent (some authors call this NIP for ``not the independence property'') if $\ID(\varphi) = N$ for some $N < \omega$.  Finally, we will say that a theory $T$ is dependent if all partitioned formulas are dependent.

Fix a set of partitioned formulas $\Phi(\overline{x}; \overline{y}) = \{ \varphi_i(\overline{x}; \overline{y}) : i \in I \}$.  By a ``$\Phi$-type over $B$'' for some set $B$ of $\leng(\overline{y})$-tuples we mean a consistent set of formulas of the form $\varphi_i(\overline{x}; \overline{b})^t$ for some $t < 2$ and ranging over all $\overline{b} \in B$ and $i \in I$.  If $p$ is a $\Phi$-type over $B$, then we will say that $p$ has domain $\dom(p) = B$.  For any $B$ a set of $\leng(\overline{y})$-tuples, the space of all $\Phi$-types with domain $B$ is denoted $S_\Phi(B)$.  If $\Phi = \{ \varphi \}$ is a singleton, then we will replace $\varphi$ with $\{ \varphi \}$ in our previous definitions.

The remainder of this paper is organized as follows: In Section \ref{Section_PO}, we focus only on partial orders.  Abstractifying the notion of indiscernibility to simply colorings on partial orders, we produce a means of partitioning the ordering into homogeneous subsets with respect to the coloring.  Applying this to indiscernibles, this generalizes the ``bounded alternation rank'' characterization of dependent formulas (Theorem II.4.13 (2) of \cite{Shelah}) and may be of independent interest.  In Section \ref{Section_GeneralIndiscn}, we define and discuss partial order indiscernibles.  We prove Theorem \ref{Thm_UDTFIPOS} above using the techniques of \cite{Mypaper2} and the partitioning theorem from Section \ref{Section_PO}.  Finally, in Section \ref{Section_Discussion}, we discuss the broader implications of this result and state some natural open questions that remain.

\section{Partial Orders}\label{Section_PO}


Before discussing general indiscernibility and even model theory, we first work in the universe of pure partial orders.  The discussion before Dilworth's Theorem (Theorem \ref{Thm_BoundAntiChainBoundChain}) is elementary and the results are certainly not due to this author.

Fix $(P; \unlhd)$ a partial order.  An \emph{antichain} $A$ of the partial order $P$ is a subset of $P$ such that, for all $i, j \in A$, $i \nlhd j$ and $j \nlhd i$.  By contrast, a \emph{chain} $C$ of the partial order $P$ is a subset of $P$ such that, for all $i, j \in C$, $i \unlhd j$ or $j \unlhd i$.  We will say that an antichain $A$ is \emph{maximal} if there does not exist $i \in P-A$ such that $A \cup \{ i \}$ is an antichain, and we similarly define a \emph{maximal} chain.  For any subset $P_0 \subseteq P$, $(P_0; \unlhd |_{P_0 \times P_0} )$ is a partial order and will be called a \emph{suborder} of $(P; \unlhd)$.  For any maximal antichain $A$, define the following sets:
\begin{itemize}
 \item [(i)] $D(A) = \{ i \in P : (\exists j \in A)(i \lhd j) \}$ (the downward closure of $A$).
 \item [(ii)] $U(A) = \{ i \in P : (\exists j \in A)(j \lhd i) \}$ (the upward closure of $A$).
\end{itemize}

\begin{lem}\label{Lem_AntichainLemma}
 Fix $A \subseteq P$ a maximal antichain.  For any $i \in P-A$, exactly one of the following hold:
 \begin{itemize}
  \item [(i)] There exists $j \in A$ such that $i \lhd j$.
  \item [(ii)] There exists $j \in A$ such that $j \lhd i$.
 \end{itemize}
 That is, $\{ D(A), A, U(A) \}$ is a partition of $P$.
\end{lem}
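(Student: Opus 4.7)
The plan is to verify the two clauses of ``exactly one'' separately: existence of at least one of (i), (ii), and incompatibility of both.

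For existence, I would take $i \in P - A$ and use the maximality of $A$. Since $A \cup \{i\}$ is strictly larger than $A$, it fails to be an antichain. Because $A$ itself is an antichain and $i \notin A$, the failure must involve $i$ and some $j \in A$; that is, either $i \lhd j$ or $j \lhd i$. This immediately gives (i) or (ii).

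For incompatibility, I would argue by contradiction. Suppose both hold, so there exist $j_1, j_2 \in A$ with $i \lhd j_1$ and $j_2 \lhd i$. Transitivity of $\lhd$ yields $j_2 \lhd j_1$. If $j_1 \neq j_2$, this contradicts $A$ being an antichain; if $j_1 = j_2$, it gives $j_1 \lhd j_1$, which contradicts irreflexivity of the strict order. Either way we reach a contradiction.

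To conclude the partition statement, I would observe that the three sets $D(A)$, $A$, $U(A)$ are pairwise disjoint: $D(A) \cap A = \emptyset$ and $U(A) \cap A = \emptyset$ follow directly from $A$ being an antichain (an element of $A$ cannot be strictly below or above another element of $A$), and $D(A) \cap U(A) = \emptyset$ is exactly the incompatibility shown above. Together with the existence part, which shows $P - A \subseteq D(A) \cup U(A)$, this yields $P = D(A) \sqcup A \sqcup U(A)$. There is no genuine obstacle here; the proof is entirely a manipulation of the antichain, maximality, and transitivity definitions, so the main thing to be careful about is not to confuse $\lhd$ (strict) with $\unlhd$ (non-strict) when applying irreflexivity.
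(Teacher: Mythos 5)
Your proof is correct and uses exactly the same two ingredients the paper cites in its one-line proof: maximality of $A$ to show at least one of (i), (ii) holds, and transitivity (plus irreflexivity) to show they cannot both hold. You have simply unpacked the details that the paper leaves implicit.
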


\begin{proof}
 By the maximality of $A$ and transitivity.
\end{proof}

In general, for any antichain $A$, if $j \lhd i$ for some $i \in A$, then $i \nlhd j$ for all $i \in A$.  In this case, we say that $j \lhd A$.  If there exists $i \in A$ such that $i \lhd j$, we say that $A \lhd j$.  If $j \nlhd A$ and $A \nlhd j$, then $A \cup \{ j \}$ is again an antichain.  If $j \lhd A$ or $j \in A$, we write $j \unlhd A$ and similarly for $A \unlhd j$.

\begin{lem}\label{Lem_AntichainContainment}
 Fix $A \subseteq P$ a maximal antichain and suppose $A' \subseteq (D(A) \cup A)$ is an antichain.  Then there exists $A''$ with $A' \subseteq A'' \subseteq (D(A) \cup A)$ such that $A''$ is a maximal antichain of $P$ (the whole order).
\end{lem}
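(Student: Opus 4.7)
The plan is to first extend $A'$ to a maximal antichain inside the suborder $D(A) \cup A$, and then argue that this extension is automatically maximal in all of $P$. Concretely, let $A''$ be any maximal antichain of the suborder $(D(A) \cup A; \unlhd)$ containing $A'$; such an extension exists by a routine Zorn/finite-iteration argument applied to the partially ordered set of antichains of $D(A) \cup A$ containing $A'$, ordered by inclusion. By construction $A'' \subseteq D(A) \cup A$ and $A''$ is an antichain of $P$.

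The main content is verifying that $A''$ is \emph{maximal} in $P$, i.e.\ that for every $k \in P - A''$ the set $A'' \cup \{k\}$ fails to be an antichain. If $k \in D(A) \cup A$ this is immediate from the maximality of $A''$ in the suborder. The case $k \in U(A)$ is the only one requiring thought, and it is the main obstacle. Fix $i \in A$ with $i \lhd k$ (using $k \in U(A)$). If $i \in A''$, then $i \lhd k$ witnesses the failure, so assume $i \notin A''$. Since $A'' \subseteq D(A) \cup A$ is maximal in the suborder and $i \in A \subseteq D(A) \cup A$, there exists $j \in A''$ comparable to $i$.

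The delicate point is ruling out $i \lhd j$. Suppose for contradiction $i \lhd j$. If $j \in A$ then $i, j \in A$ with $i \lhd j$, contradicting that $A$ is an antichain. If instead $j \in D(A)$, pick $i' \in A$ with $j \lhd i'$; then $i \lhd j \lhd i'$ gives $i \lhd i'$ with $i, i' \in A$, again contradicting the antichain property of $A$. Hence $j \lhd i$, and transitivity with $i \lhd k$ yields $j \lhd k$, so $A'' \cup \{k\}$ is not an antichain. This completes the maximality of $A''$ in $P$ and hence the lemma.

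I do not expect any further subtleties; the only place where one must be careful is the direction of the comparison in the final case, which is forced by the antichain property of the original $A$. Lemma~\ref{Lem_AntichainLemma} is implicitly being used to understand the partition $\{D(A), A, U(A)\}$, but otherwise the argument is purely combinatorial and uses no properties of $P$ beyond transitivity.
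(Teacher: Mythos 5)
Your proof is correct and follows essentially the same route as the paper: a Zorn/finite-iteration step to obtain a maximal antichain $A''$ of the suborder $D(A) \cup A$, combined with the key observation that any $j \in D(A) \cup A$ comparable to some $i \in A$ must satisfy $j \lhd i$, which forces $A''$ to be maximal in all of $P$. The only difference is cosmetic: you explicitly rule out the possibility $i \lhd j$, whereas the paper silently omits that case from its trichotomy ``$i \in A'$, $A' \lhd i$, or $A' \cup \{i\}$ is an antichain.''
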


\begin{proof}
 If $A'$ is not a maximal antichain of $P$, then there exists $j \in P - A'$ such that $A' \cup \{ j \}$ is an antichain.  If $j \notin (D(A) \cup A)$, then $j \in U(A)$ by Lemma \ref{Lem_AntichainLemma}.  Therefore, there exists $i \in A$ so that $i \lhd j$.  Since $A' \subseteq (D(A) \cup A)$, either $i \in A'$, $A' \lhd i$, or $A' \cup \{ i \}$ is an antichain.  If $i \in A'$ or $A' \lhd i$, then $A' \lhd j$, contrary to assumption.  Therefore, $A' \cup \{ i \}$ is an antichain contained in $(D(A) \cup A)$ that is $\lhd$-below $j$.  Use Zorn's Lemma to conclude.
\end{proof}

This also holds for $U(A) \cup A$ by symmetry.  Given two maximal antichains $A, A' \subseteq P$, say that $A \unlhd A'$ if $A \subseteq D(A') \cup A'$ (i.e., for all $i \in A$, $i \in A'$ or $i \lhd A'$).  Of course, there can be maximal antichains that are incomparable, but transitivity will clearly hold for this relation.  Furthermore, for any maximal antichain $A \subseteq P$, if $D(A) \neq \emptyset$, then there exists $A' \subseteq P$ a maximal antichain such that $A' \lhd A$ (and similarly if $U(A) \neq \emptyset$) by Lemma \ref{Lem_AntichainContainment}.

For any $A \unlhd A'$ maximal antichains of $P$, define $(A, A') = U(A) \cap D(A')$, let $[A,A') = (A \cup U(A)) \cap D(A')$, and let $[A,A'] = A \cup (A,A') \cup A'$.  Let $[-\infty, A) = D(A)$, let $[-\infty, A] = A \cup D(A)$, let $[A, \infty) = A \cup U(A)$, and let $[-\infty, \infty) = P$ (think of these as ``intervals'' of $P$).  So, for any $A_0 \lhd A_1 \lhd ... \lhd A_n$ maximal antichains of $P$, $[-\infty, A_0)$, $[A_0, A_1)$, $...$, $[A_{n-1}, A_n)$, $[A_n, \infty)$ is a partition of $P$.

We define $\Lev_n^-(P)$, the $n$th level of $P$ from below, by induction as follows:
\[
 \Lev_n^-(P) = \left\{ i \in P - \bigcup_{\ell < n} \Lev_\ell^-(P) : \left( \nexists j \in P - \bigcup_{\ell < n} \Lev_\ell^-(P) \right)(j \lhd i) \right\}.
\]
So $\Lev_0^-(P)$ is the antichain of the least elements of $P$, and $\Lev_1^-(P)$ is the antichain of the least elements of $P - \Lev_0^-(P)$, and so on.  We define $\Lev_n^+(P)$ by reversing the ordering.  Notice that, for all $i \in \Lev_n^-(P)$, there exists $i_0 \in \Lev_0^-(P)$, ..., $i_{n-1} \in \Lev_{n-1}^-(P)$ such that $i_0 \lhd ... \lhd i_{n-1} \lhd i$.

\begin{thm}[Dilworth's Theorem, \cite{Dilworth}]\label{Thm_BoundAntiChainBoundChain}
 Fix $n < \omega$.  If $(P; \unlhd)$ is a finite partial order such that, for all antichains $A \subseteq P$, $|A| \le n$, then $P$ is the disjoint union of at most $n$ chains.
\end{thm}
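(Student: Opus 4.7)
The plan is to prove Dilworth's Theorem by strong induction on $|P|$. The base case $|P| = 0$ is vacuous, so assume the result for all finite partial orders of strictly smaller cardinality, and that $n \ge 1$. The key move is to pick a \emph{maximal} chain $C \subseteq P$ (maximal in the sense that $C \cup \{i\}$ fails to be a chain for every $i \in P - C$) and split into two cases depending on what the removal of $C$ leaves behind.

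In the first case, every antichain of the suborder $P - C$ has size at most $n-1$. The inductive hypothesis then writes $P - C$ as a union of at most $n-1$ chains, and appending $C$ yields a covering of $P$ by at most $n$ chains. In the second case, $P - C$ contains an antichain $A$ of size $n$, and I must produce the decomposition by hand. Since $|A| = n$ is maximal in $P$ and $A \cap C = \emptyset$, the antichain $A$ is maximal in $P$, so Lemma \ref{Lem_AntichainLemma} gives the partition $P = D(A) \sqcup A \sqcup U(A)$. Setting $D = D(A) \cup A$ and $U = U(A) \cup A$, I need to show that both $D$ and $U$ are proper subsets of $P$ so that induction applies; this is precisely where the maximality of the chain $C$ is used. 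The top element $c^+$ of $C$ cannot satisfy $c^+ \lhd a$ for any $a \in A$ (else $C \cup \{a\}$ would be a chain properly extending $C$), and since $c^+ \notin A$, this forces $c^+ \in U(A) - A$, hence $c^+ \notin D$. Symmetrically, the bottom element $c^- \notin U$.

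Applying the inductive hypothesis to $D$ (whose maximum antichain has size exactly $n$, realized by $A$) gives chains $D_1, \ldots, D_n$ covering $D$. Since each $D_i$ meets the antichain $A$ in at most one element and there are $n$ chains covering the $n$-element set $A$, each $D_i$ contains exactly one element $a_i \in A$; and for any $c \in D_i - \{a_i\}$, the fact that $c \in D(A)$ gives $c \lhd a_j$ for some $j$, while comparability with $a_i$ inside the chain $D_i$ combined with $A$ being an antichain forces $c \lhd a_i$. Thus $a_i = \max D_i$. Symmetrically, $U = U_1 \cup \cdots \cup U_n$ with $a_i = \min U_i$ after reindexing. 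Then each $D_i \cup U_i$ is a chain of $P$, and these $n$ chains together cover $D \cup U = P$.

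The main obstacle is the second case: one must simultaneously ensure that $D$ and $U$ are both proper subsets of $P$ (so that induction is available on each) and that the chain decompositions obtained from each side can be glued along $A$. The first difficulty is resolved by choosing $C$ to be a maximal chain rather than just a single element, which blocks both $c^+ \in D$ and $c^- \in U$. The gluing difficulty is resolved by observing that on each piece, the element of $A$ sitting in a chain $D_i$ (respectively $U_i$) must be its maximum (respectively minimum), a fact that follows cleanly from $A$ being an antichain together with the chain-structure of $D_i$ and $U_i$.
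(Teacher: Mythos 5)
The paper does not prove Dilworth's theorem; it is stated as a known result and attributed to \cite{Dilworth}, so there is no in-paper argument to compare against. Your proof is correct and complete. It is the standard maximal-chain induction (often attributed to Perles): in the case where a size-$n$ antichain $A$ survives in $P - C$, the maximality of the chain $C$ is exactly what pins its endpoints strictly above and strictly below $A$, giving $c^+ \notin D(A) \cup A$ and $c^- \notin U(A) \cup A$ and hence making both $D$ and $U$ proper, so the induction applies to each side; and the gluing works because in each inductive chain the unique element of $A$ must be the extremum, which you justify correctly from comparability within the chain together with $A$ being an antichain. One small point worth noting explicitly, though your argument already covers it implicitly: the inductive hypothesis delivers pairwise disjoint chains, and since $D \cap U = A$ while each $D_i \cap A = \{a_i\}$ and $U_i \cap A = \{a_i\}$, the glued chains $D_i \cup U_i$ remain pairwise disjoint, so the conclusion really is a disjoint union as the statement requires.
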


We now discuss $2$-colorings of a finite partial order $(P; \unlhd)$.  We will use this in the next section when proving definability of types over finite partial order indiscernibles.

\begin{defn}\label{Defn_IndiscernibleColoring}
 Fix $(P; \unlhd)$ a partial order, $f : P \rightarrow \{ 0, 1 \}$, and $N < \omega$.  We say that $f$ is a \emph{$N$-indiscernible coloring of $P$} if,
 \begin{itemize}
  \item [(i)] for all antichains $A \subseteq P$, there exists $t < 2$ such that $| \{ i \in P : f(i) = t \} | \le N$; and
  \item [(ii)] there does not exist $i_0 \lhd i_1 \lhd ... \lhd i_{2N+1}$ from $P$ such that, for all $\ell < 2N+1$, $f(i_\ell) = 1 - f(i_{\ell+1})$ (that is, the coloring on any chain does not alternate more than $2N+1$ times).
 \end{itemize}
\end{defn}

Fix $(P; \unlhd)$ a finite partial order and $f : P \rightarrow 2$ a $N$-indiscernible coloring of $P$.  For any subset $X \subseteq P$ and $t < 2$, define $X^t$ as follows:
\[
 X^t = \{ i \in X : f(i) = t \} = (f^{-1}(t) \cap X).
\]
Note that $X = X^0 \cup X^1$ and $X^0 \cap X^1 = \emptyset$.  For any antichain $A \subseteq P$ with $|A| > 2N$, there exists a unique $t < 2$ such that $|A^t| \le N$.  If not, then $A$ would violate condition (i) of Definition \ref{Defn_IndiscernibleColoring}.  In this case, define $\Maj(A) = t$ ($\Maj$ stands for ``majority'').  We now use this to give a means of breaking down partial orders $P$ in terms of subsets $X$ on which $f$ is constant.

\begin{lem}\label{Lem_IndiscBreakdown}
 Let $M = (2N+1)(N+1)$.  There exists $A_0 \lhd ... \lhd A_{K-1}$ for $K \le 2N+2$ maximal antichains of $P$ such that, for all $n \le K$ and all antichains $A \subseteq [A_{n-1}, A_n)$, $|A^{n (\mathrm{mod}\ 2)}| \le M$ (let $A_{-1} = - \infty$ and $A_K = \infty$).  That is, each $P_n = [A_{n-1}, A_n)$ is such that all antichains $A \subseteq P_n$ have $f(i) = n+1 (\mathrm{mod}\ 2)$ for ``almost all'' $i \in A$.
\end{lem}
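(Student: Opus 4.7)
The plan is to assign each $i \in P$ an alternation rank $\alpha(i) \in \{0, 1, \dots, 2N+1\}$, where $\alpha(i)$ is the maximum number of color-alternations along any $\lhd$-chain ending at $i$. Condition (ii) bounds $\alpha$, and a direct monotonicity check yields $i \lhd j \Rightarrow \alpha(i) \le \alpha(j)$, with strict inequality whenever $f(i) \neq f(j)$. Hence on each level $\Lambda_k := \alpha^{-1}(k)$ any two $\lhd$-comparable elements share their $f$-value, and tracking the last alternation of a witness chain for $\alpha(j) = k+1$ shows the nonempty levels form an initial segment $\Lambda_0, \dots, \Lambda_{K^*}$ with $K^* \le 2N+1$.

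The first substantive step is to identify a consistent minority color $c_k \in \{0,1\}$ on each level: every antichain $A \subseteq \Lambda_k$ should satisfy $|A^{c_k}| \le N$. If two antichains $A, A' \subseteq \Lambda_k$ had opposite majorities with minority counts exceeding $N$, their respective minority-colored subsets would be pairwise $\lhd$-incomparable (cross-comparability inside $\Lambda_k$ would force equal colors), so their union would be a $P$-antichain with both color counts above $N$, contradicting (i). A related argument applied to the predecessor in $\Lambda_k$ supplied by the last alternation of a witness chain forces $c_{k+1} \neq c_k$ on consecutive nonempty levels, so the minorities strictly alternate.

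Given alternating level-minorities, I construct the antichains $A_n$ by descending induction: let $A_{K-1}$ be a maximal antichain of $P$ containing the minimal elements of $\Lambda_{K^*}$, and at each step choose $A_{n-1}$ as a maximal antichain of $P$ contained in $D(A_n) \cup A_n$ and containing the minimal elements of $\bigcup_{k \ge k_n} \Lambda_k$, using Lemma \ref{Lem_AntichainContainment}. A parity shift---taking $A_0 = \Lev_0^-(P)$ with $P_0 = \emptyset$ when $c_0 = 1$---absorbs the $+1$ in $K \le 2N+2$. The interval $P_n = [A_{n-1}, A_n)$ is then contained in a block of at most $2N+1$ consecutive levels of constant minority; decomposing any antichain $A \subseteq P_n$ as $\bigsqcup_k (A \cap \Lambda_k)$, each summand is an antichain within a single level contributing at most $N$ minority-colored elements, plus boundary contributions from $A_{n-1}$ itself bounded via (i) and the consistency of the $c_k$, summing to at most $(2N+1)(N+1) = M$.

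\textbf{Main obstacle.} I expect the hardest step to be the alternation claim $c_{k+1} \neq c_k$: the naive argument via minority-subsets works when both levels contain sufficiently large antichains of the appropriate colors, but one must handle the possibility that most opposite-colored predecessors in $\Lambda_k$ form a chain rather than an antichain, which seems to require a Dilworth-style argument applied to the monochromatic subset of $\Lambda_k$ together with the level-bound from the previous step. A secondary challenge is aligning the abstract level decomposition (defined only implicitly through $\alpha$) with a concrete $\unlhd$-chain of maximal antichains of $P$: the levels need not themselves be antichains, so the extensions provided by Lemma \ref{Lem_AntichainContainment} must be iterated compatibly across different $n$ to guarantee $A_0 \lhd \cdots \lhd A_{K-1}$ and that each interval sits inside the expected block.
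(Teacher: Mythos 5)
Your approach via alternation rank $\alpha$ is a natural idea, and the monotonicity facts you state are correct: $i \lhd j$ gives $\alpha(i) \le \alpha(j)$, with strict inequality when $f(i) \neq f(j)$, and comparable elements in a single level share a color. (Minor point: condition (ii) of Definition \ref{Defn_IndiscernibleColoring} forbids $2N+1$ alternations, so $\alpha$ takes values in $\{0, \dots, 2N\}$, not $\{0, \dots, 2N+1\}$.) The step establishing a consistent majority color on each level is also correct. However, the pivotal claim in your plan --- that the level minorities strictly alternate, $c_{k+1} \neq c_k$ --- is false, and this is not merely the hardest step but a genuine failure of the approach.

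Here is a counterexample with $N = 1$. Let $P = \{a_1, \dots, a_{100}\} \cup \{i^*\} \cup \{b_1, \dots, b_{100}\}$, where the only strict relations are $i^* \lhd b_j$ for each $j$ (all other pairs incomparable). Set $f(a_i) = 0$, $f(i^*) = 1$, $f(b_j) = 0$. This is a $1$-indiscernible coloring: the only color-$1$ element is $i^*$, so every antichain $A$ satisfies $|A^1| \le 1 = N$, and every chain has at most two elements, hence at most one alternation. The ranks are $\alpha(a_i) = \alpha(i^*) = 0$ and $\alpha(b_j) = 1$, so $\Lambda_0 = \{a_1, \dots, a_{100}, i^*\}$ and $\Lambda_1 = \{b_1, \dots, b_{100}\}$. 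Both levels have color-$0$ majority and color-$1$ minority, so $c_0 = c_1 = 1$. The argument you gesture at --- tracking the predecessor in $\Lambda_k$ supplied by the last alternation --- cannot produce a contradiction here because many majority-colored elements of $\Lambda_{k+1}$ may share a single minority-colored predecessor in $\Lambda_k$; the predecessor set need not be a large antichain, so condition (i) never kicks in. Once alternation fails, your bound on $|A^{n \bmod 2}|$ over $[A_{n-1}, A_n)$ collapses, since it needs each interval to sit in a block of levels with constant minority equal to $n \bmod 2$. (Indeed your own prose is already in tension: you claim the minorities strictly alternate, and then that each interval lies in a block of levels of ``constant minority.'')

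The paper's proof sidesteps the alternation claim entirely. It does not derive the antichains from the $\Lambda_k$'s; it builds $A_n$ directly and greedily, taking $A_n$ to be $\lhd$-minimal among maximal antichains above $A_{n-1}$ with $|A_n^{n \bmod 2}| > M$, and the required bound on antichains inside $[A_{n-1}, A_n)$ is immediate from that minimality via Lemma \ref{Lem_AntichainContainment}. Condition (ii) enters only in the termination argument, where one threads a large set $A^*_n \subseteq A_n^{n \bmod 2}$ upward through the $A_\ell$'s by a matching argument to produce a chain alternating $2N+1$ times. That is the correct place to spend the alternation bound; it does not require --- and as the example shows cannot support --- any alternating-minority hypothesis on rank-levels. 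If you want to keep the $\alpha$-machinery, it might serve as an alternative way to bound $K$, but the construction of the $A_n$ themselves has to come first and independently.
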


\begin{proof}
 We inductively construct, for each $n$, $A_n \subseteq P$ a maximal antichain of $P$ as follows: Fix $n \ge 0$ and suppose $A_\ell$ are defined for all $\ell < n$.  If it exists, choose $A_n \subseteq P$ maximal such that
 \begin{itemize}
  \item [(i)] $A_0 \lhd ... \lhd A_{n-1} \lhd A_n$,
  \item [(ii)] $| A_n^{n (\mathrm{mod}\ 2)} | > M$ (hence $\Maj(A_n) \equiv n (\mathrm{mod}\ 2)$), and
  \item [(iii)] $A_n$ is $\lhd$-minimal such.
 \end{itemize}
 If no such $A_n$ exists, set $K = n$ and the construction terminates.  For any $n \le K$ and any antichain $A \subseteq [A_{n-1}, A_n)$, if $|A^{n (\mathrm{mod}\ 2)}| > M$, then extend this to a maximal antichain $A' \subseteq [A_{n-1}, A_n]$ (which exists by Lemma \ref{Lem_AntichainContainment} on the suborder $P_0 = (-\infty, A_n]$).  Then $A_{n-1} \lhd A' \lhd A_n$ and $|(A')^{n (\mathrm{mod}\ 2)}| > M$, contrary to the minimality of $A_n$.  Therefore, $|A^{n (\mathrm{mod}\ 2)}| \le M$.  We now show that this process terminates in $K < 2N+2$ steps.

 Assuming $K \ge 2N+2$, inductively define, for each $n < K$, $A^*_n \subseteq A_n^{n (\mathrm{mod}\ 2)}$ with
 \begin{itemize}
  \item [(i)] $|A^*_n| > (2N+1-n)(N+1)$, and
  \item [(ii)] for all $i \in A^*_n$, there exists $i_0 \in A^*_0, ..., i_{n-1} \in A^*_{n-1}$ such that $i_0 \lhd i_1 \lhd ... \lhd i_{n-1} \lhd i$.
 \end{itemize}
 Let $A^*_0 = (A_0)^0$, which satisfies (i) by assumption and (ii) vacuously.  Now, suppose that $A^*_{n-1}$ is constructed.  For each $X \subseteq A^*_{n-1}$ with $|X| = N+1$ and $Y \subseteq A_n^{n (\mathrm{mod}\ 2)}$ with $|Y| = N+1$, we claim that there exists $i \in X$ and $j \in Y$ such that $i \lhd j$.  If not, then $i \nlhd j$ for all such $i, j$.  However, since $X \subseteq A^*_{n-1} \subseteq P^{n+1 (\mathrm{mod}\ 2)}$ and $Y \subseteq A_n^{n (\mathrm{mod}\ 2)}$, we see that $i \neq j$.  Furthermore, since $A_{n-1} \lhd A_n$, $j \nlhd i$.  Therefore $X \cup Y$ is an antichain.  However, this contradicts Definition \ref{Defn_IndiscernibleColoring} (i).  Therefore, choosing $i_0 \in X$ and $j_0 \in Y$ such that $i_0 \lhd j_0$, we consider now $(N+1)$-element subsets of $A^*_{n-1} - \{ i_0 \}$ and $(N+1)$-element subsets of $A_n^{n (\mathrm{mod}\ 2)} - \{ j_0 \}$.  Continuing in this manner, we see that there exists $A^*_n \subseteq A_n^{n (\mathrm{mod}\ 2)}$ such that each $i \in A^*_n$ is $\lhd$-below some element of $A^*_{n-1}$ and $|A^*_n| > (2N+1-n)(N+1)$.  Thus $A^*_n$ satisfies conditions (i) and (ii), as desired.

 Finally, consider $A^*_{2N+1}$.  By (i), $|A^*_{2N+1}| > 0$, so it is, in particular, non-empty.  Fix $i_{2N+1} \in A^*_{2N+1}$.  By condition (ii), there exists $i_0 \in A^*_0, ..., i_{2N} \in A^*_{2N}$ such that $i_0 \lhd i_1 \lhd ... \lhd i_{2N} \lhd i_{2N+1}$.  However, for each $\ell < 2N+2$, since $i_\ell \in A^*_\ell \subseteq P^{\ell (\mathrm{mod}\ 2)}$, $f(i_\ell) \equiv \ell (\mathrm{mod}\ 2)$.  This contradicts Definition \ref{Defn_IndiscernibleColoring} (ii).  Therefore, $K < 2N+2$, as desired.
\end{proof}

\begin{thm}[$N$-Indiscernible Coloring Decomposition Theorem]\label{Thm_PosetDefinability}
 Let $M = (2N+1)(N+1)$.  There exists $A_0 \lhd ... \lhd A_{K-1}$ for $K \le 2N+2$ maximal antichains of $P$ and $C_{n,\ell} \subseteq [A_{n-1}, A_n)$ for $\ell < M$ chains of $P$ such that
 \[
  P^1 = \left( \bigcup_{n \equiv 0 (\mathrm{mod}\ 2)} [A_{n-1}, A_n) - \left( \bigcup_{\ell < M} C_{n,\ell} \right) \cup  \bigcup_{n \equiv 1 (\mathrm{mod}\ 2), \ell < M} C_{n,\ell} \right).
 \]
\end{thm}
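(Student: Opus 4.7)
The plan is to combine the interval decomposition from Lemma \ref{Lem_IndiscBreakdown} with Dilworth's Theorem (Theorem \ref{Thm_BoundAntiChainBoundChain}) applied to the minority-colored elements of each interval.

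First I would invoke Lemma \ref{Lem_IndiscBreakdown} to obtain the maximal antichains $A_0 \lhd \ldots \lhd A_{K-1}$ with $K \le 2N+2$, so that the intervals $P_n := [A_{n-1}, A_n)$ (with the usual convention $A_{-1} = -\infty$, $A_K = \infty$) partition $P$, and every antichain $A \subseteq P_n$ satisfies $|A^{n\ (\mathrm{mod}\ 2)}| \le M$. This means color $n \pmod 2$ is the ``minority'' color inside $P_n$.

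Next, for each $n \le K$, let $X_n = P_n^{n\ (\mathrm{mod}\ 2)}$ be the set of minority-colored elements in the $n$-th interval, viewed as a suborder of $P$. Any antichain $A \subseteq X_n$ is in particular an antichain of $P_n$ whose elements are all colored $n \pmod 2$, so $|A| = |A^{n\ (\mathrm{mod}\ 2)}| \le M$. Hence $X_n$ has no antichain of size exceeding $M$, and Theorem \ref{Thm_BoundAntiChainBoundChain} (Dilworth's Theorem) produces chains $C_{n,0}, \ldots, C_{n,M-1} \subseteq X_n \subseteq P_n$ (some possibly empty, to fill out exactly $M$ pieces) whose disjoint union equals $X_n$.

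It then remains to intersect both sides with each $P_n$ and check equality of $P^1$ interval-by-interval. When $n$ is odd, the minority color is $1$, so $P^1 \cap P_n = X_n = \bigcup_{\ell < M} C_{n,\ell}$, which matches the second term of the displayed union. When $n$ is even, the minority color is $0$, so $P^1 \cap P_n = P_n \setminus X_n = P_n \setminus \bigcup_{\ell < M} C_{n,\ell}$, which matches the first term. Taking the union over $n \le K$ and using that $\{P_n : n \le K\}$ partitions $P$ yields the desired identity.

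There is no real obstacle: the hard combinatorial work has already been done in Lemma \ref{Lem_IndiscBreakdown} (which exploits conditions (i) and (ii) of Definition \ref{Defn_IndiscernibleColoring} to bound $K$), and the chain decomposition is immediate from Dilworth's Theorem once one notices that the minority-colored part of each interval inherits the antichain bound. The only mildly subtle point is the parity bookkeeping when writing $P^1$ as ``interval minus chains'' on even intervals versus ``chains'' on odd intervals, which I would handle by the interval-by-interval case split described above.
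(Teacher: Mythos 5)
Your proof is correct and follows essentially the same route as the paper: invoke Lemma \ref{Lem_IndiscBreakdown} to get the antichains, apply Dilworth's Theorem to the minority-colored part of each interval to extract the chains $C_{n,\ell}$, and then verify the displayed identity interval-by-interval via the parity case split. The only difference from the paper's write-up is notational (the paper uses $P_n$ for what you call $X_n$).
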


\begin{proof}
 Use the maximal antichains $A_0, ..., A_{K-1} \subseteq P$ as given by Lemma \ref{Lem_IndiscBreakdown}.  Fix $n \le K$ and consider $P_n = \{ i \in [A_{n-1}, A_n) : f(i) \equiv n (\mathrm{mod}\ 2) \}$.  By the condition given in Lemma \ref{Lem_IndiscBreakdown}, for each antichain $A$ of $(P_n, \unlhd)$, $|A| \le M$.  Therefore, by Theorem \ref{Thm_BoundAntiChainBoundChain}, $(P_n, \unlhd)$ is the disjoint union of at most $M$ chains, say $C_{n,\ell}$.  That is, $P_n = \bigcup_{\ell < M} C_{n,\ell}$.  Therefore, for each $n$ and each $i \in [A_{n-1},A_n)$, $f(i) \equiv n (\mathrm{mod}\ 2)$ if and only if $i \in \bigcup_{\ell < M} C_{n,\ell}$.  The conclusion follows.
\end{proof}

There are two problems with this decomposition in terms of uniform definability.  For one, the antichains $A_n$ may be arbitrarily large, so checking if $i \in [A_{n-1}, A_n)$ could require arbitrarily much information from $A_{n-1}$ and $A_n$.  Another problem is that the chains $C_{n,\ell}$ may be arbitrarily large.  We address the problems in reverse order.  First, for any $i_0, i_1 \in P$, define $[i_0, i_1]_P = \{ i \in P : i_0 \unlhd i \unlhd i_1 \}$.

\begin{lem}\label{Lem_ChainFix}
 Fix $t < 2$ and suppose that $C \subseteq P^t$ is any chain.  There exists $i_0 \unlhd i'_0 \unlhd i_1 \unlhd i'_1 \unlhd ... \unlhd i_K \unlhd i'_K$ from $C$ for $K \le N$ such that
 \[
  C \subseteq \left( \bigcup_{n \le K} [i_n, i'_n]_P \right) \subseteq P^t.
 \]
\end{lem}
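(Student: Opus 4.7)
The plan is to construct the pairs $i_n \unlhd i'_n$ greedily, moving up the chain $C$, and to bound the number of pairs via clause (ii) of Definition~\ref{Defn_IndiscernibleColoring}. Set $i_0$ to be the $\unlhd$-minimum of $C$. Given $i_n \in C$, let $i'_n$ be the $\unlhd$-largest element $c \in C$ with $i_n \unlhd c$ and $[i_n, c]_P \subseteq P^t$; such an element exists because $c = i_n$ trivially satisfies both conditions (we have $\{i_n\} \subseteq C \subseteq P^t$). If $i'_n$ is the $\unlhd$-maximum of $C$, terminate and set $K = n$; otherwise let $i_{n+1}$ be the immediate $C$-successor of $i'_n$ and continue.

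By construction, $[i_n, i'_n]_P \subseteq P^t$ for each $n \le K$, and the sets $C \cap [i_n, i'_n]_P$ partition $C$ since $i_{n+1}$ is chosen as the $C$-successor of $i'_n$. Hence $C \subseteq \bigcup_{n \le K} [i_n, i'_n]_P \subseteq P^t$, as required. The substance of the lemma is the bound $K \le N$, which is where clause (ii) of Definition~\ref{Defn_IndiscernibleColoring} must enter.

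For each $n < K$, the maximality of $i'_n$ together with the fact that $i_{n+1} \in C$ is a strictly $\unlhd$-larger candidate forces $[i_n, i_{n+1}]_P \not\subseteq P^t$, so there exists $j_n \in [i_n, i_{n+1}]_P$ with $f(j_n) = 1 - t$. Since $i_n$ and $i_{n+1}$ both have color $t$, we get $j_n \neq i_n, i_{n+1}$, hence $i_n \lhd j_n \lhd i_{n+1}$. Splicing successive witnesses together via $j_n \lhd i_{n+1} \unlhd j_{n+1}$ produces an alternating chain
\[
 i_0 \lhd j_0 \lhd i_1 \lhd j_1 \lhd \cdots \lhd j_{K-1} \lhd i_K
\]
consisting of $2K+1$ elements whose $f$-values alternate at every step. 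Clause (ii) of Definition~\ref{Defn_IndiscernibleColoring} forbids such an alternating chain of length $2N+2$ or more, so $2K+1 \le 2N+1$, i.e.\ $K \le N$. The main obstacle is finding the alternation witnesses $j_n$ and ensuring they thread into a single $\unlhd$-chain; this works out cleanly here because each $j_n$ lives in the nested interval $[i_n, i_{n+1}]_P$, so the comparabilities needed to concatenate are automatic.
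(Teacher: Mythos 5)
Your argument is correct and follows the paper's own strategy: greedily build the interval endpoints along $C$, then bound $K$ by the no-long-alternation clause of Definition~\ref{Defn_IndiscernibleColoring}. Your bookkeeping, defining $i'_n$ directly as the largest endpoint of $C$ with $[i_n, i'_n]_P \subseteq P^t$, is slightly cleaner than the paper's, which instead tracks each off-color element $j$ via a bracketing pair $i^-_j \lhd j \lhd i^+_j$ in $C$ and jumps past the nearest one, but both constructions produce the same alternating chain $i_0 \lhd j_0 \lhd i_1 \lhd \cdots$ and the same bound $K \le N$.
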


\begin{proof}
 Let $i_0$ be the minimal element of $C$ and $i'_K$ the maximal element of $C$.  For every $j \in [i_0, i_K]_P$ such that $f(j) = 1-t$, let $i^-_j \lhd i^+_j$ be from $C$ so that $i^-_j \lhd j \lhd i^+_j$ and $i^-_j$ is $\lhd$-maximal such and $i^+_j$ is $\lhd$-minimal such.  Fix $n \ge 0$ and suppose that $i_n$ is constructed.  Choose $j \in ([i_0, i_K]_P)^{1-t}$ such that $i_n \unlhd i^-_j$, $i^-_j$ is $\lhd$-minimal such, and $i^+_j$ is $\lhd$-minimal such.  Let $i'_n \in C$ be $\lhd$-maximal such that $i'_n \lhd i^+_j$ and let $i_{n+1} = i^+_j$.  If no such $j$ exists, then the construction terminates and set $K = n$.

 First, it is clear that $C \subseteq \bigcup_{n < K} [i_n, i'_n]_P$ as, for each $n < K$, $i'_n$ and $i_{n+1}$ are $C$-consecutive.  We claim that, $\bigcup_{n < K} [i_n, i'_n]_P \subseteq P^t$ and $K \le N$.  First, fix $j \in [i_n, i'_n]_P$.  If $j = i_n$ or $j = i'_n$, then $f(j) = t$, so we may assume $j \neq i_n, i'_n$.  If $f(j) = 1-t$, then we have $i_n \lhd j \lhd i'_n$.  Therefore $i_n \unlhd i^-_j$ and $i^+_j \unlhd i'_n$, contrary to construction.  Therefore, $f(j) = t$.  Secondly, suppose that $K > N$.  By construction, for any $n < N$, there exists $j_n \in P^{1-t}$ such that $i_n \lhd j_n \lhd i_{n+1}$.  Hence, we see that $i_0 \lhd j_0 \lhd i_1 \lhd j_1 \lhd ... \lhd i_N \lhd j_N$ contradicts Definition \ref{Defn_IndiscernibleColoring} (ii).  This gives the desired result.
\end{proof}

So the chains $C_{n,\ell}$ can be taken to be a union of at most $N+1$ closed intervals.  What about the arbitrarily large antichains?

\begin{lem}\label{Lem_AntichainFix}
 For all maximal antichains $A$ and both $t < 2$ such that $|A^{1-t}| \le N$, there exists $A_0 \subseteq A$ with $|A_0| \le 2N+1$, $J^- \subseteq P^{1-t}$ with $|J^-| \le N$, and $J^+ \subseteq P^{1-t}$ with $|J^+| \le N$ so that, for all $j \in P^{1-t}$,
 \begin{itemize}
  \item [(i)] $j \lhd A$ if and only if $j \lhd A_0$ or $j \unlhd J^-$, and
  \item [(ii)] $A \lhd j$ if and only if $A_0 \lhd j$ or $J^+ \unlhd j$.
 \end{itemize}
\end{lem}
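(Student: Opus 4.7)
The plan is to take $A_0$ to be $A^{1-t}$ together with $\min(N+1,|A^t|)$ elements of $A^t$, and to let $J^\pm$ absorb the few maximal/minimal witnesses that $A_0$ fails to catch.  Explicitly, set $Q^- = \{j \in P^{1-t} : j \lhd A \text{ and } j \not\lhd A^{1-t}\}$ and let $M^-$ be the set of $\lhd$-maximal elements of $Q^-$; since $Q^- \subseteq P^{1-t}$, the set $M^-$ is an antichain of $P$ of color $1-t$, and every element of $Q^-$ satisfies $j \unlhd m$ for some $m \in M^-$ because $P$ is finite.  Define $Q^+, M^+$ symmetrically.  Choose any $A_0^t \subseteq A^t$ with $|A_0^t| = \min(N+1, |A^t|)$, put $A_0 = A^{1-t} \cup A_0^t$ (so $|A_0| \le 2N+1$), and define $J^- = \{m \in M^- : m \not\lhd A_0^t\}$ and $J^+ = \{m \in M^+ : A_0^t \not\lhd m\}$.

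The only substantive step is the bound $|J^\pm| \le N$, which I would prove by contradiction.  If $|A^t| < N+1$, then $A_0^t = A^t$ and every $m \in M^-$ is $\lhd$ some element of $A^t$ (by the definition of $Q^-$), forcing $J^- = \emptyset$.  Otherwise $|A_0^t| = N+1$; assuming $|J^-| \ge N+1$, pick $m_1,\dots,m_{N+1} \in J^-$ and $i_1,\dots,i_{N+1} \in A_0^t$.  These $2N+2$ elements are distinct (the two groups have different colors) and I claim they form an antichain of $P$: pairs within $\{m_k\}$ and within $\{i_\ell\}$ are incomparable by the antichain properties of $M^-$ and $A$ respectively; $m_k \not\lhd i_\ell$ holds by the defining condition of $J^-$; and $i_\ell \not\lhd m_k$ holds because any $i' \in A$ with $m_k \lhd i'$ (one exists since $m_k \in Q^-$) would force $i_\ell \lhd m_k \lhd i'$ with $i_\ell,i' \in A$, violating the antichain $A$.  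This yields an antichain with $N+1 > N$ elements of each of the two colors, contradicting Definition~\ref{Defn_IndiscernibleColoring}(i).  The argument for $J^+$ is entirely symmetric.

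Given the size bounds, verifying the biconditionals (i) and (ii) is routine.  For $(\Leftarrow)$ of (i): $A_0 \subseteq A$ gives $j \lhd A_0 \Rightarrow j \lhd A$, and each $k \in J^- \subseteq Q^-$ satisfies $k \lhd A$, so $j \unlhd k$ propagates to $j \lhd A$.  For $(\Rightarrow)$ of (i): if $j \lhd A^{1-t}$, then $j \lhd A_0$ via $A^{1-t} \subseteq A_0$; otherwise $j \in Q^-$, and picking $m \in M^-$ with $j \unlhd m$, we get either $m \lhd A_0^t$ (so $j \lhd A_0$) or $m \in J^-$ (so $j \unlhd J^-$).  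Direction (ii) follows by the symmetric argument.  The main obstacle is the antichain construction in the size bound, where Definition~\ref{Defn_IndiscernibleColoring}(i) is used crucially; the rest of the proof is bookkeeping.
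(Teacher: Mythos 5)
Your proof is correct and follows essentially the same strategy as the paper: take $A_0 = A^{1-t}$ together with $\min(N+1,|A^t|)$ elements of $A^t$, use the antichain bound from Definition~\ref{Defn_IndiscernibleColoring}(i) to bound $|J^\pm| \le N$, and pick $J^\pm$ as a small set of $\lhd$-extremal witnesses in $P^{1-t}$ that catch whatever $A_0$ misses. The only cosmetic difference is that you define $J^-$ directly as the filtered set of $\lhd$-maximal elements of $Q^-$, whereas the paper builds $J^-$ by a greedy induction; your version has the minor advantage of making $J^- \subseteq P^{1-t}$ visibly true by construction.
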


\begin{proof}
 If $|A| \le 2N$, then set $A_0 = A$, $J^- = J^+ = \emptyset$.  So we may assume $|A| > 2N$ and $t = \Maj(A)$.  Fix any subset $A_0 \subseteq A$ such that $|A_0^t| = N+1$ and $A_0^{1-t} = A^{1-t}$.  Since $t = \Maj(A)$, $|A^{1-t}| \le N$.  Therefore, $|A_0| \le 2N+1$.  We now construct $J^-$ by induction as follows: Let $J^-_0 = \emptyset$.  Fix $n > 0$ and suppose that $J^-_{n-1}$ is constructed.  If there exists $j \in P^{1-t}$ such that $j \lhd A$ and $A_0 \cup J^-_{n-1} \cup \{ j \}$ is an antichain, then choose $j' \lhd A$ $\lhd$-maximal such that $j \unlhd j'$.  Then, $A_0 \cup J^-_{n-1} \cup \{ j' \}$ is clearly still an antichain (if $j'$ were below some $i \in A_0 \cup J^-_{n-1} \cup \{ j' \}$, then $j$ would be too by transitivity).  Let $J^-_n = J^-_{n-1} \cup \{ j' \}$ and continue.  The construction halts when there exist no such $j$ and we set $J^- = J^-_{n-1}$.  Construct $J^+$ similarly for $j \in P^{1-t}$ so that $A \lhd j$.

 We claim that this construction works and (each) halts in at most $N$ steps.  Since $A_0 \cup J^-_n$ is an antichain, $|A_0^t| = N+1$, and $J^- \subseteq P^{1-t}$, if $|J^-| > N$, then this contradicts Definition \ref{Defn_IndiscernibleColoring} (i).  Therefore, $|J^-| \le N$.  If $j \in P^{1-t}$ and $j \lhd A$, then either $j \lhd A_0$, or $j \unlhd J^-$ by construction of $J^-$ and similarly for $J^+$.
\end{proof}

This lemma implies that maximal antichains form a strong barrier for the non-majority color.  That is, the relationship of any $j \in D(A)^{1-t}$ to all of $[A, \infty)$ is determined by a set of size $\le 3N+1$.

\begin{cor}\label{Cor_AntichainBarrier}
 Fix $A$ a maximal antichain and $t < 2$ such that $|A^{1-t}| \le N$.  For $A_0$ and $J^-$ as in Lemma \ref{Lem_AntichainFix}, there exists a partition of $[A, \infty)$ into $X_I$ for $I \subseteq (A_0 \cup J^-)$ such that, for all $j \in P^{1-t}$ with $j \lhd A$, for all $i \in [A, \infty)$, $j \lhd i$ if and only if $i \in X_{\{ i' \in A_0 : j \lhd i' \} \cup \{ j' \in J^- : j \unlhd j' \}}$.
\end{cor}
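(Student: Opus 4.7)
The plan is to promote Lemma \ref{Lem_AntichainFix}, which describes the relation $j \lhd A$ for $j \in P^{1-t}$, to a description of the entire set $Y_j := \{i \in [A, \infty) : j \lhd i\}$. Concretely, I will show that $Y_j$ is determined by the signature $I(j) := \{i' \in A_0 : j \lhd i'\} \cup \{j' \in J^- : j \unlhd j'\}$, and then set $X_I := Y_j$ for any $j \in P^{1-t}$ with $j \lhd A$ realizing $I(j) = I$. The remaining indices absorb those elements of $[A, \infty)$ not lying above any such $j$, so that the collection $\{X_I : I \subseteq A_0 \cup J^-\}$ partitions $[A, \infty)$ and the required biconditional holds by construction.

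The central claim is a rigidity statement: if $j_1, j_2 \in P^{1-t}$ both satisfy $j_1, j_2 \lhd A$ and $I(j_1) = I(j_2)$, then $Y_{j_1} = Y_{j_2}$. To prove it, fix $i \in [A, \infty)$ with $j_1 \lhd i$; by symmetry it suffices to deduce $j_2 \lhd i$. Lemma \ref{Lem_AntichainFix} gives two cases: $j_1 \lhd A_0$, or $j_1 \unlhd J^-$. In the second case, pick $j' \in I(j_1) \cap J^- = I(j_2) \cap J^-$; by the $\lhd$-maximality of $J^-$-elements built into the construction of that lemma, $j' \lhd i$ must hold whenever $j_1 \unlhd j'$ and $j_1 \lhd i$ (otherwise $j'$ could have been enlarged while remaining $\lhd A$ and $\unrhd j_1$, contradicting its maximality), whence $j_2 \unlhd j' \lhd i$. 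In the first case, pick $i' \in I(j_1) \cap A_0 = I(j_2) \cap A_0$ to get $j_2 \lhd i'$, and transfer this to $j_2 \lhd i$ through the antichain structure of $A$.

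The main obstacle is the transfer step in the first case, since $j_1 \lhd i'$ and $j_1 \lhd i$ do not by themselves produce any comparability between $i'$ and $i$. My plan is to pick a witness $i^* \in A$ with $j_1 \lhd i^* \unlhd i$, which exists because $i \in [A, \infty) = A \cup U(A)$: if $i^* \in A_0$, then $i^* \in I(j_1) = I(j_2)$, so $j_2 \lhd i^* \unlhd i$ and we are done; otherwise $i^* \in A - A_0 \subseteq A^t$, and one must use the cardinality bounds $|A_0^t| = N+1$ and $|J^-| \le N$ in tandem with the $N$-indiscernibility condition on antichains (Definition \ref{Defn_IndiscernibleColoring}) to produce a surrogate element of $A_0 \cup J^-$ that routes $j_2$ up to $i$. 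Once the rigidity claim is established, the disjointness of the $X_I$'s for distinct realized $I$'s, together with the stated biconditional, are both immediate from the definition of $X_I := Y_j$.
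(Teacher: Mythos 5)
Your argument takes a genuinely different route from the paper's and has a structural gap before you even reach the ``main obstacle'' you flag. The paper's proof is a one-line definition: put $i \in X_I$ exactly when $I = \{i' \in A_0 \cup J^- : i' \unlhd i\}$, i.e.\ index the cells by the \emph{downward} signature of $i$ itself in $A_0 \cup J^-$. The $X_I$ are then fibers of the map $i \mapsto \{i' \in A_0 \cup J^- : i' \unlhd i\}$ on $[A, \infty)$, so disjointness is automatic. You instead set $X_I := Y_j$ where $Y_j = \{i \in [A, \infty) : j \lhd i\}$ for some $j$ realizing $I(j) = I$, and you close by asserting that disjointness is immediate. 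It is not: upward cones of $j$'s with distinct signatures can overlap. With $a_1, a_2 \in A_0$, $j_1 \lhd a_1 \lhd i$, $j_2 \lhd a_2 \lhd i$, $j_1$ incomparable to $a_2$, $j_2$ incomparable to $a_1$, one gets $I(j_1) = \{a_1\} \neq \{a_2\} = I(j_2)$ while $i \in Y_{j_1} \cap Y_{j_2}$, so your cells $X_{\{a_1\}}$ and $X_{\{a_2\}}$ intersect. The construction therefore does not yield a partition even if rigidity were granted.

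The transfer step you call the main obstacle is also a real gap, and the tools you name do not fill it. Lemma \ref{Lem_AntichainFix} controls whether $j \lhd A$; it says nothing about whether a comparability $j \lhd i$ for $i \in U(A)$ is witnessed through $A_0 \cup J^-$, and in a general poset such a comparability need not be witnessed through $A$ at all. Concretely, take $P$ with $A = A_0 = \{a_1, a_2\}$, $J^- = \emptyset$, and elements $j, j', i$ where $j \lhd a_1$, $j' \lhd a_1$, $a_2 \lhd i$, $j \lhd i$, and all other pairs are incomparable (in particular $a_1$ and $i$ are incomparable, so $j' \nlhd i$). This is a valid poset and $f(j)=f(j')=1-t$, $f(a_1)=f(a_2)=f(i)=t$ is an $N$-indiscernible coloring for any $N \ge 1$; yet $I(j) = I(j') = \{a_1\}$ while $i \in Y_j$ and $i \notin Y_{j'}$. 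So no ``surrogate element of $A_0 \cup J^-$'' exists to route $j'$ up to $i$, and the rigidity claim does not follow from the cited lemmas (the analogous appeal to $\lhd$-maximality of $J^-$-elements in your second case has the same defect: that maximality constrains only the relation to $A$, not to an arbitrary $i \in U(A)$). Switching to the paper's direct indexing by the downward signature of $i$ is the way to sidestep both problems.
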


\begin{proof}
 Fix $A'_0$ and $J^-$ given as in Lemma \ref{Lem_AntichainFix}.  For $i \in [A, \infty)$, put $i \in X_I$ if and only if $I = \{ i' \in A_0 \cup J^- : i' \unlhd i \}$.
\end{proof}

A similar result holds for $A_0 \cup J^+$ and $[-\infty, A]$ by symmetry.  We will use Theorem \ref{Thm_PosetDefinability} and the other tools of this section in the next section to prove Theorem \ref{Thm_UDTFIPOS}.

\section{General $\Delta$-Indiscernibility}\label{Section_GeneralIndiscn}


\subsection{Introduction}

Work in a complete theory $T$ in a language $L$ with monster model $\mathfrak{C}$.  Fix $\Delta(\overline{z}_0, ..., \overline{z}_n)$ any set of $L$-formulas where $\leng(\overline{z}_i) = \leng(\overline{z}_j)$ and let $P$ be an $S$-structure for some different language, $S$ (we will call this language $S$ the \emph{index language}).  Let $\langle \overline{b}_i : i \in P \rangle$ be a sequence of elements from $\mathfrak{C}^{\leng(\overline{z}_0)}$ indexed by $P$.

\begin{defn}[General Indiscernibility]\label{Defn_GenIndiscSequence}
 The sequence $\langle \overline{b}_i : i \in P \rangle$ is \emph{$\Delta$-indiscernible} (with respect to the $S$-structure $P$) if, for all $i_0, ..., i_n \in P$ distinct and all $j_0, ..., j_n \in P$ distinct such that $\qftp_S(i_0, ..., i_n) = \qftp_S(j_0, ..., j_n)$ (i.e. there exists a partial $S$-elementary map $f$ so that $f(i_k) = j_k$ for all $k \le n$),
 \[
  \tp_\Delta(\overline{b}_{i_0}, ..., \overline{b}_{i_n}) = \tp_\Delta(\overline{b}_{j_0}, ..., \overline{b}_{j_n}).
 \]
 If we drop the $\Delta$, then we mean that $\langle \overline{b}_i : i \in P \rangle$ is $\Delta$-indiscernible for all appropriate $\Delta$.
\end{defn}

In this section, we will be interested in the case where $\Delta$ is finite, $S = \{ \unlhd \}$, and $P$ is a partial order.  In the case where $P$ is a linear order, Definition \ref{Defn_GenIndiscSequence} is the usual definition of a $\Delta$-indiscernible sequence.  When $P$ is completely unordered, Definition \ref{Defn_GenIndiscSequence} is the usual definition of a $\Delta$-indiscernible set.  Given $(P; \unlhd)$ a partial order, a sequence $\langle \overline{b}_i : i \in P \rangle$ is $\Delta$-indiscernible if and only if, for all $i_0, ..., i_n \in P$ distinct and all $j_0, ..., j_n \in P$ distinct, if $i_k \unlhd i_\ell$ if and only if $j_k \unlhd j_\ell$ for all $k, \ell \le n$, then
\[
 \tp_\Delta(\overline{b}_{i_0}, ..., \overline{b}_{i_n}) = \tp_\Delta(\overline{b}_{j_0}, ..., \overline{b}_{j_n}).
\]

Suppose now that $\varphi(\overline{x}; \overline{y})$ is any dependent formula.  For any $n < \omega$, define
\begin{equation}\label{Eq_DeltaN}
 \Delta_{n,\varphi}(\overline{z}_0, ..., \overline{z}_n) = \left\{ \exists \overline{x} \left( \bigwedge_{i \le n} \varphi(\overline{x}; \overline{z}_i)^{s(i)} \right) : s \in {}^{n+1} 2 \right\}.
\end{equation}
Using the machinery of indiscernible sequences with this special set of formulas $\Delta_{n,\varphi}$, we aim to prove the following theorem:

\begin{thm}\label{Thm_NIPPOIndisc}
 The following are equivalent for a partitioned formula $\varphi(\overline{x}; \overline{y})$:
 \begin{itemize}
  \item [(i)] $\varphi$ is dependent.
  \item [(ii)] There exists $N, K, L < \omega$ and formulas $\psi_\ell(\overline{y}; \overline{z}_0, ..., \overline{z}_K)$ for $\ell < L$ such that, for all finite partial orders $(P; \unlhd)$, all $\Delta_{N,\varphi}$-indiscernible sequences $\langle \overline{b}_i : i \in P \rangle$, and all $p \in S_\varphi(\{ \overline{b}_i : i \in P \})$, there exists $\ell < L$ and $i_0, ..., i_K \in P$ such that, for all $j \in P$,
   \[
    \varphi(\overline{x}; \overline{b}_j) \in p(\overline{x}) \text{ if and only if } \models \psi_\ell(\overline{b}_j; \overline{b}_{i_0}, ..., \overline{b}_{i_K}).
   \]
 \end{itemize}
\end{thm}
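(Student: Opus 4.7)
The plan is to prove the two directions separately. The implication (ii) $\Rightarrow$ (i) will reduce to the linear-order case from \cite{Mypaper2}: every linear order is a partial order, and every fully indiscernible sequence over a linear order is in particular $\Delta_{N,\varphi}$-indiscernible, so specializing (ii) to linearly ordered fully indiscernible sequences yields uniform definability of $\varphi$-types over finite indiscernible sequences, whence Theorem 1.2 (ii) of \cite{Mypaper2} gives dependence of $\varphi$.

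For (i) $\Rightarrow$ (ii), I would set $N_0 = \ID(\varphi)$ and fix $N$ sufficiently large (to be pinned down as a function of $N_0$). Given a $\Delta_{N,\varphi}$-indiscernible sequence $\langle \overline{b}_i : i \in P \rangle$ over a finite partial order $(P;\unlhd)$ and a type $p \in S_\varphi(\{\overline{b}_i : i \in P\})$, define $f : P \rightarrow 2$ by $f(i) = 1$ iff $\varphi(\overline{x}; \overline{b}_i) \in p$. The first task is to verify that $f$ is an $N_0$-indiscernible coloring in the sense of Definition \ref{Defn_IndiscernibleColoring}. For the antichain clause, if $|A^0|, |A^1| > N_0$, then by $\Delta_{N_0+1,\varphi}$-indiscernibility all $(N_0+1)$-subsets of $A$ share the same $\Delta_{N_0+1,\varphi}$-type; combining this with the various mixed sign patterns that $p$ forces on subsets of $A^0 \cup A^1$ (pick appropriately many from each color), any fixed $(N_0+1)$-subset of $A$ turns out to be $\varphi$-independent, contradicting $\ID(\varphi) = N_0$. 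For the chain clause, $2N_0+2$ alternations along a chain contradict dependence by the standard alternation-to-independence argument on linearly indiscernible sequences (essentially Theorem II.4.13 (2) of \cite{Shelah}).

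Theorem \ref{Thm_PosetDefinability} then yields maximal antichains $A_0 \lhd \cdots \lhd A_{K-1}$ with $K \le 2N_0+2$, and chains $C_{n,\ell}$, whose signed union is $P^1 = f^{-1}(1)$. Lemma \ref{Lem_ChainFix} replaces each $C_{n,\ell}$ by a union of at most $N_0+1$ closed intervals $[i_n, i'_n]_P$, while Lemma \ref{Lem_AntichainFix} and Corollary \ref{Cor_AntichainBarrier} replace each maximal antichain by a bounded representative $A_{0,n} \cup J^-_n \cup J^+_n$ of size $\le 3N_0+1$ governing all order relations between that antichain and the non-majority color on each side. This produces a bounded collection of required parameter indices $i_0, \ldots, i_K \in P$, with $K$ depending only on $N_0$.

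The hard part will be translating this combinatorial description into genuine $L$-formulas $\psi_\ell$ that work uniformly across $(P, \langle \overline{b}_i \rangle, p)$. I would choose $N$ large enough that $\Delta_{N,\varphi}$-indiscernibility entails $\Delta_{K+1,\varphi}$-indiscernibility on $(K+2)$-tuples; then for any $j \in P$, the $\Delta_{K+1,\varphi}$-type of $(\overline{b}_j, \overline{b}_{i_0}, \ldots, \overline{b}_{i_K})$ is determined by the quantifier-free $S$-type of $(j, i_0, \ldots, i_K)$. Since the decomposition expresses $f(j)$ as a Boolean function of this quantifier-free $S$-type, it follows that $f(j)$ factors through the $\Delta_{K+1,\varphi}$-type of the tuple; as $\Delta_{K+1,\varphi}$ is a finite set of $L$-formulas, only finitely many such types occur, each captured by a Boolean combination of $L$-formulas. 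Enumerating over the finitely many combinatorial shapes of the decomposition (all bounded in $N_0$) then produces the desired finite list $\psi_0, \ldots, \psi_{L-1}$.
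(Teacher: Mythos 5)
Your (ii) $\Rightarrow$ (i) direction is correct and in fact cleaner than the paper's: the paper uses a direct type-counting argument ($2^m$ types vs.\ the polynomial bound $L \cdot m^K$), while you reduce to the linear-order result of Theorem 1.2 (ii) of \cite{Mypaper2} by specializing to linear orders. Both work. The verification that $f$ is an $N$-indiscernible coloring and the invocation of Theorem \ref{Thm_PosetDefinability}, Lemma \ref{Lem_ChainFix}, and Lemma \ref{Lem_AntichainFix} to produce a bounded parameter set also track the paper's structure (cf.\ Lemma \ref{Lem_IndiscChainAntiChains}).

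However, the final paragraph of your (i) $\Rightarrow$ (ii) argument contains a genuine logical gap, and it is exactly at the hardest step of the theorem. You write that since $\qftp_S(j, i_0, \ldots, i_K)$ determines $\tp_\Delta(\overline{b}_j, \overline{b}_{i_0}, \ldots, \overline{b}_{i_K})$, and since the decomposition makes $f(j)$ a Boolean function of $\qftp_S(j, i_0, \ldots, i_K)$, it follows that $f(j)$ factors through the $\Delta$-type. This inference runs in the wrong direction. You have two maps with common domain the set of $S$-types: $\qftp_S \mapsto \tp_\Delta$ and $\qftp_S \mapsto f(j)$; what you need is that whenever two extensions $j, j'$ of the parameters have the same $\Delta$-type over $\overline{b}_{i_0}, \ldots, \overline{b}_{i_K}$, they have the same $f$-value, i.e.\ that the first map refines the second. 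Nothing you have said rules out the map $\qftp_S \mapsto \tp_\Delta$ collapsing distinct $S$-types that $f$ separates. This collapse actually happens: it is precisely the phenomenon that forces the paper into the case division between Case 1 (chains are homogeneous, so the $\Delta$-type cannot see the direction of $\unlhd$ along a chain) and Case 2 (some $\delta \in \pm\Delta$ is order-sensitive). In Case 1 the $\Delta$-type genuinely loses order information, and the paper must instead roughly define the chains and antichains directly (Lemmas \ref{Lem_DefiningAntichain}, \ref{Lem_SimpleDecomposition}, \ref{Lem_DefiningChain}); in Case 2 it uses the order-sensitive formula of \eqref{Eq_OrderSensitive} and Lemma \ref{Lem_IncomparableOrdering} to recover the needed order information (Lemmas \ref{Lem_DefiningChain2}, \ref{Lem_DefiningAntichain2}). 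Your proposal, as written, assumes away this entire difficulty; to repair it you would need to argue, as the paper does, that $\Delta$-formulas over boundedly many $\overline{b}_i$'s actually do separate the pieces of the decomposition --- and that argument occupies the bulk of Section \ref{Section_GeneralIndiscn}.
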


As an immediate corollary, we get Theorem \ref{Thm_UDTFIPOS}.  That is, a partitioned formula $\varphi(\overline{x}; \overline{y})$ is dependent if and only if it has uniform definability of types over finite partial order indiscernibles.  This generalizes the result of Theorem 1.2 (ii) of \cite{Mypaper2}.

First, to show that (ii) implies (i), we only need to count types.  Suppose, by means of contradiction, that $\varphi$ is independent and (ii) holds.  Then, by Ramsey's Theorem, for any $m < \omega$, there exists $\langle \overline{b}_i : i \in P \rangle$ a $\Delta_{N,\varphi}$-indiscernible sequence, where $(P; \unlhd)$ is a linear order with $|P| = m$, such that the set $\{ \overline{b}_i : i \in P \}$ is $\varphi$-independent.  Therefore, the size of $S_\varphi(\{ \overline{b}_i : i \in P \})$ is exactly $2^m$.  However, since each type in $S_\varphi(\{ \overline{b}_i : i \in P \})$ is determined by $\ell < L$ and $i_0, ..., i_K \in P$, the number of $\varphi$-types over $\{ \overline{b}_i : i \in P \}$ is $\le L \cdot |P|^K = L \cdot m^K$.  Therefore, $2^m \le L \cdot m^K$.  However, our choice of $m$ was arbitrary (in particular, independent of $L$ and $K$).  This is a contradiction.

The converse is trickier to show, and will involve a detailed analysis of $\Delta_{N,\varphi}$-indiscernible sequences.  Suppose $\varphi$ is dependent and let $N = \ID(\varphi)$.  Let $\Delta = \Delta_{N,\varphi}$ as in \eqref{Eq_DeltaN} above.  We begin with a lemma for $\Delta$-indiscernible sequences indexed by partial orders.  The proof of this lemma is a simple modification of the proof of Theorem II.4.13 of \cite{Shelah}, but we include it here for completeness.

\begin{lem}\label{Lem_IndiscChainAntiChains}
 Fix $(P; \unlhd)$ a partial order, let $\langle \overline{b}_i : i \in P \rangle$ be a $\Delta$-indiscernible sequence, and fix any $\overline{a} \in \mathfrak{C}^{\leng(\overline{x})}$.  Let $f : P \rightarrow 2$ be defined by, for all $i \in P$, $f(i) = 1$ if and only if $\models \varphi(\overline{a}; \overline{b}_i)$.  Then $f$ is an $N$-indiscernible coloring of $P$.
\end{lem}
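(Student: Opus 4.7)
The plan is to verify conditions (i) and (ii) of Definition \ref{Defn_IndiscernibleColoring} separately, each by contradiction. In both cases the overall strategy is identical: from a hypothetical failure of the relevant condition I will extract $N+1$ indices $i_0, \dots, i_N \in P$ and show, using the $\Delta$-indiscernibility hypothesis applied to the existential formulas comprising $\Delta_{N,\varphi}$, that $\{ \overline{b}_{i_0}, \dots, \overline{b}_{i_N} \}$ is $\varphi$-independent, contradicting $N = \ID(\varphi)$.

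For condition (i), suppose that some antichain $A \subseteq P$ has both $|\{i \in A : f(i) = 0\}| > N$ and $|\{i \in A : f(i) = 1\}| > N$. Fix any $N+1$ distinct $i_0, \dots, i_N \in A$. For each $s : \{0, \dots, N\} \to 2$ the size hypothesis lets me pick distinct $j_0, \dots, j_N \in A$ with $f(j_k) = s(k)$, and by the definition of $f$ the element $\overline{a}$ witnesses $\exists \overline{x} \bigwedge_{k \le N} \varphi(\overline{x}; \overline{b}_{j_k})^{s(k)}$. Both $(i_0, \dots, i_N)$ and $(j_0, \dots, j_N)$ consist of $N+1$ pairwise $\unlhd$-incomparable distinct elements of $P$, so they share the same quantifier-free $\{\unlhd\}$-type; transporting this existential formula (which lies in $\Delta_{N,\varphi}$) via $\Delta$-indiscernibility yields the same formula on $(\overline{b}_{i_0}, \dots, \overline{b}_{i_N})$, giving the needed $\varphi$-independence.

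For condition (ii), suppose a chain $i_0 \lhd i_1 \lhd \dots \lhd i_{2N+1}$ has fully alternating $f$-values; interchanging color names if necessary, I may assume $f(i_\ell) \equiv \ell \pmod{2}$. I claim $\{ \overline{b}_{i_0}, \dots, \overline{b}_{i_N} \}$ is $\varphi$-independent. Given any $s : \{0, \dots, N\} \to 2$, a greedy walk produces indices $\ell_0 < \ell_1 < \dots < \ell_N$ in $\{0, \dots, 2N+1\}$ with $f(i_{\ell_k}) = s(k)$: at each step alternation guarantees that both colors appear among the next two positions, so the next admissible index advances by at most $2$, and after $N+1$ steps one checks $\ell_N \le 2N+1$. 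Then $\overline{a}$ witnesses $\exists \overline{x} \bigwedge_{k \le N} \varphi(\overline{x}; \overline{b}_{i_{\ell_k}})^{s(k)}$, while $(i_0, \dots, i_N)$ and $(i_{\ell_0}, \dots, i_{\ell_N})$ are both $\lhd$-increasing chains of length $N+1$ and so share the same quantifier-free $\{\unlhd\}$-type. Applying $\Delta$-indiscernibility transports the formula to $(\overline{b}_{i_0}, \dots, \overline{b}_{i_N})$, completing the contradiction.

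The main technical step, such as it is, is the greedy subsequence extraction in (ii): one must check that the length $2N+2$ of the assumed alternating chain is precisely enough so that every string in ${}^{N+1}2$ embeds as a color-subsequence of the alternating color pattern. This is elementary counting, and everything else is a routine transport across qftp-equivalent tuples using the $\Delta_{N,\varphi}$-indiscernibility hypothesis.
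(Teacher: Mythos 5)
Your proof is correct and takes essentially the same approach as the paper: for each condition you negate it, extract $N+1$ indices, and use $\Delta_{N,\varphi}$-indiscernibility to transport the existential witnesses $\exists\overline{x}\bigwedge_{k\le N}\varphi(\overline{x};\overline{y}_k)^{s(k)}$ onto a single fixed tuple, yielding a $\varphi$-independent set of size $N+1$. The only difference is cosmetic: for (ii) the paper picks indices by the explicit formula $2\ell+s(\ell)$ and transports to $(i_0,i_2,\dots,i_{2N})$, while you describe the same selection as a greedy walk and transport to $(i_0,\dots,i_N)$; both are increasing chains of length $N+1$ so either target works.
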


\begin{proof}
 (i): Suppose, by way of contradiction, that there exists $i_0^0$, $...$, $i_N^0$, $i_0^1$, $...$, $i_N^1 \in A$ distinct from some antichain $A \subseteq P$ such that $f(i_\ell^t) = t$ for all $t < 2$ and $k \le N$.  Then, for any $s \in {}^{N+1} 2$, the following formula is witnessed by $\overline{a}$:
 \[
  \models \exists \overline{x} \left( \bigwedge_{\ell \le N} \varphi(\overline{x}; \overline{b}_{i_\ell^{s(\ell)}})^{s(\ell)} \right).
 \]
 However, by $\Delta$-indiscernibility, we get that
 \[
  \models \exists \overline{x} \left( \bigwedge_{\ell \le N} \varphi(\overline{x}; \overline{b}_{i_\ell^0})^{s(\ell)} \right).
 \]
 Since this holds for all $s \in {}^{N+1} 2$, we get that $\{ \overline{b}_{i_\ell^0} : \ell \le N \}$ is a $\varphi$-independent set of size $N+1$, contrary to the fact that $\ID(\varphi) = N$.
 
 (ii): Suppose, by way of contradiction, that we have $i_0 \lhd ... \lhd i_{2N+1}$ such that $f(i_\ell) \neq f(i_{\ell+1})$ for all $\ell < 2N+1$.  Without loss of generality, suppose $f(\overline{b}_{i_0}) = 0$.  For any $s \in {}^{N+1} 2$, as witnessed by $\overline{a}$, we have that
 \[
  \models \exists \overline{x} \left( \bigwedge_{\ell \le N} \varphi(\overline{x}; \overline{b}_{i_{2\ell + s(\ell)}})^{s(\ell)} \right).
 \]
 By $\Delta$-indiscernibility, we get that
 \[
  \models \exists \overline{x} \left( \bigwedge_{\ell \le N} \varphi(\overline{x}; \overline{b}_{i_{2\ell}})^{s(\ell)} \right).
 \]
 Again, this yields a contradiction.
\end{proof}

By Theorem \ref{Thm_PosetDefinability} and the tools of Section \ref{Section_PO}, to prove the remainder of Theorem \ref{Thm_NIPPOIndisc}, it suffices to show that the ordering of $P$ is $L$-definable.  For the remainder of this section, fix $(P; \unlhd)$ a finite partial order, $\langle \overline{b}_i : i \in P \rangle$ a $\Delta$-indiscernible sequence with respect to $P$, and $\overline{a} \in \mathfrak{C}^{\leng(\overline{x})}$.  As in the previous section, for any $X \subseteq P$ and $t < 2$, define
\[
 X^t = \{ i \in X : \models \varphi(\overline{a}; \overline{b}_i)^t \}.
\]

Our method for defining types will be to use Theorem \ref{Thm_PosetDefinability} to decompose $P$, then use uniform definitions for handling the various pieces.  For large subsets $X \subseteq P$, we cannot hope to get exact definitions for which $\overline{b}_i$ are such that $i \in X$ with a bounded number of parameters.  Instead, we will focus on ``rough definitions.''  Fix $t < 2$ and $X \subseteq P^t$.  We say that $X$ is \emph{roughly definable} if there exists $\gamma_X(\overline{y})$ uniform over boundedly many elements of $\{ \overline{b}_i : i \in P \}$ such that
\begin{itemize}
 \item [(i)] For all $i \in X$, $\models \gamma_X(\overline{b}_i)$, and
 \item [(ii)] For all $i \in P$, if $\models \gamma_X(\overline{b}_i)$, then $i \in P^t$.
\end{itemize}
If we can break up, for some $t < 2$, $P^t$ into a bounded number of subsets $X_0, ..., X_n$, each of which is roughly definable, then we can uniformly define which $i \in P^t$, hence develop a uniform definition of finite $\varphi$-types.  This will be the goal of the remainder of this section.

\subsection{Homogeneous Sets}

One useful tool will be homogeneity.

\begin{defn}\label{Defn_Homogeneous}
 We say that $X \subseteq P$ is \emph{homogeneous} (with respect to the $\Delta$-indiscernible sequence $\langle \overline{b}_i : i \in P \rangle$) if, for all $i_0, ..., i_N \in X$ distinct and all $j_0, ..., j_N \in X$ distinct,
 \[
  \tp_\Delta(\overline{b}_{i_0}, ..., \overline{b}_{i_N}) = \tp_\Delta(\overline{b}_{j_0}, ..., \overline{b}_{j_N}).
 \]
 That is, $\langle \overline{b}_i : i \in X \rangle$ is $\Delta$-indiscernible over the empty structure on $X$.
\end{defn}

For example, any antichain $A \subseteq P$ is homogeneous.  In the next few lemmas, we will show how to define large homogeneous subsets of $P$.  The following lemma is shown exactly as Lemma \ref{Lem_IndiscChainAntiChains} (i), noting that the only fact we used about $A$ was that it was homogeneous:

\begin{lem}\label{Lem_HomogenousColoring}
 For any $X \subseteq P$ homogeneous, there exists $t < 2$ such that $|X^t| \le N$.
\end{lem}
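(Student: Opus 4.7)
The plan is to imitate the proof of part (i) of Lemma \ref{Lem_IndiscChainAntiChains}, replacing the antichain hypothesis with the homogeneity hypothesis, as the author explicitly hints. Since the only property of the antichain $A$ used in that argument is that any two tuples of distinct elements of $A$ realize the same $\Delta$-type (which for $A$ followed from $\Delta$-indiscernibility and the fact that all tuples from an antichain share the same quantifier-free $\{\unlhd\}$-type), we may substitute homogeneity and the argument proceeds verbatim.

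Concretely, I would argue by contradiction. Suppose $|X^0| > N$ and $|X^1| > N$. Choose pairwise distinct $i_0^0, \dots, i_N^0 \in X^0$ and pairwise distinct $i_0^1, \dots, i_N^1 \in X^1$. Then for each $s \in {}^{N+1} 2$, the tuple $(i_0^{s(0)}, \dots, i_N^{s(N)})$ consists of distinct elements of $X$ (as $X^0 \cap X^1 = \emptyset$), and $\overline{a}$ itself witnesses
\[
 \models \exists \overline{x} \left( \bigwedge_{\ell \le N} \varphi(\overline{x}; \overline{b}_{i_\ell^{s(\ell)}})^{s(\ell)} \right),
\]
because $f(i_\ell^{s(\ell)}) = s(\ell)$ by the definition of $X^{s(\ell)}$.

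Now the formula on display belongs to $\Delta = \Delta_{N,\varphi}$ by definition of $\Delta_{N,\varphi}$ in \eqref{Eq_DeltaN}. Since $(i_0^0, \dots, i_N^0)$ is likewise a tuple of distinct elements of $X$, homogeneity of $X$ yields
\[
 \tp_\Delta(\overline{b}_{i_0^{s(0)}}, \dots, \overline{b}_{i_N^{s(N)}}) = \tp_\Delta(\overline{b}_{i_0^0}, \dots, \overline{b}_{i_N^0}),
\]
and therefore
\[
 \models \exists \overline{x} \left( \bigwedge_{\ell \le N} \varphi(\overline{x}; \overline{b}_{i_\ell^0})^{s(\ell)} \right).
\]
Since this holds for every $s \in {}^{N+1} 2$, the set $\{\overline{b}_{i_\ell^0} : \ell \le N\}$ is $\varphi$-independent and has size $N+1$, contradicting $\ID(\varphi) = N$. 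Hence $|X^0| \le N$ or $|X^1| \le N$.

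There is no real obstacle: the only point to verify carefully is that the formula we wish to transfer across homogeneity lies in $\Delta_{N,\varphi}$, which is immediate from the definition, and that the relevant tuples consist of distinct elements of $X$, which follows because the $X^t$ are disjoint. Everything else is exactly the argument already given for antichains.
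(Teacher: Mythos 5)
Your proof is correct and is precisely the argument the paper has in mind: the paper explicitly notes that Lemma \ref{Lem_HomogenousColoring} is "shown exactly as Lemma \ref{Lem_IndiscChainAntiChains} (i), noting that the only fact we used about $A$ was that it was homogeneous," and your write-up carries that out faithfully, using homogeneity to transfer the $\Delta_{N,\varphi}$-type across tuples of distinct elements of $X$ where the original used $\Delta$-indiscernibility over the antichain.
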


So the point now will be to start with some homogeneous set $X$ with a majority color $t < 2$.  Then, add on elements of $P^{1-t}$, preserving homogeneity, until this is no longer possible.  By Lemma \ref{Lem_HomogenousColoring}, we can add no more than $N$ elements from $P^{1-t}$ while still preserving homogeneity.  However, we will need to insure that the homogeneity of a large set $X$ is determined by a bounded subset $X_0 \subseteq X$.  We accomplish this by using Lemma \ref{Lem_AntichainFix}.

Fix $A \unlhd A'$ two maximal antichains and $t < 2$ so that $|A^t| > N$ and $|(A')^t| > N$.  Let $A_0$ and $J^-$ be given by Lemma \ref{Lem_AntichainFix} for $A$ and let $A'_0$ and $J^+$ be given by Lemma \ref{Lem_AntichainFix} for $A'$.  Let $J_0 = A_0 \cup J^-$ and $J_1 = A'_0 \cup J^+$ (so $|J_0| \le 2N+1$ and $|J_1| \le 2N+1$).  Therefore, by Lemma \ref{Lem_AntichainFix}, for all $j \in P^{1-t} - [A,A']$, $j \unlhd J_0$ or $J_1 \unlhd j$.  We now partition $[A,A']$ according to how it relates to $J_0$ and $J_1$ (similarly to Corollary \ref{Cor_AntichainBarrier}).  For all $J \subseteq J_0$ and $J' \subseteq J_1$, define
\[
 X_{J,J'} = \{ i \in [A,A'] : (\forall j \in J_0)(j \unlhd i \leftrightarrow j \in J) \wedge (\forall j' \in J_1)(i \unlhd j' \leftrightarrow j' \in J') \}.
\]

\begin{lem}\label{Lem_HomogeneityofRegions}
 Fix any two antichains $A_0, A_1 \in [A,A']$ and suppose that, for all $J \subseteq J_0$ and $J' \subseteq J_1$, either
 \begin{itemize}
  \item [(i)] $|A_0 \cap X_{J,J'}| = |A_1 \cap X_{J,J'}|$, or
  \item [(ii)] $|A_0 \cap X_{J,J'}| > N$ and $|A_1 \cap X_{J,J'}| > N$.
 \end{itemize}
 Then, for any $I_0 \subseteq P^{1-t} - [A, A']$, $A_0 \cup I_0$ is homogeneous if and only if $A_1 \cup I_0$ is homogeneous.
\end{lem}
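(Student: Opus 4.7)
The plan is to reduce homogeneity to a quantifier-free type matching question in $(P; \unlhd)$. Since $\langle \overline{b}_i : i \in P \rangle$ is $\Delta$-indiscernible, any two $(N+1)$-tuples of distinct elements of $P$ with the same qftp in the language $\{\unlhd\}$ produce equal $\Delta$-types on the corresponding $\overline{b}$-tuples. Hence to transfer homogeneity of $A_0 \cup I_0$ to $A_1 \cup I_0$, it suffices to show that every $(N+1)$-tuple $\overline{i}$ of distinct elements from $A_1 \cup I_0$ admits an $(N+1)$-tuple $\overline{j}$ from $A_0 \cup I_0$ with $\qftp_\unlhd(\overline{i}) = \qftp_\unlhd(\overline{j})$, and symmetrically.

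Next, I would establish the key structural claim: for any $i \in [A, A']$ lying in the region $X_{J, J'}$ and any $j \in I_0 \subseteq P^{1-t} - [A, A']$, the comparability of $i$ and $j$ depends only on $(J, J')$ and $j$, not on the particular $i$. Since $j \notin [A, A']$, exactly one of $j \lhd A$ or $A' \lhd j$ holds. In the former case $i \lhd j$ is impossible (it would combine with $A \unlhd i$ to give $a' \lhd a$ for some $a, a' \in A$, contradicting the antichain), and Corollary~\ref{Cor_AntichainBarrier} applied to $A$ tells us that $j \lhd i$ is decided by the $J_0$-profile of $i$, which is precisely $J$. The symmetric form applied to $A'$ handles the latter case via $J'$.

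Given this, the correspondence is built region by region. Keep every $I_0$-entry of $\overline{i}$ fixed, and within each region $X_{J, J'}$ inject the $A_1$-entries of $\overline{i}$ into $A_0 \cap X_{J, J'}$. In case (i) of the hypothesis this is a fixed bijection between two equal-size sets; in case (ii) each side has more than $N$ elements, which comfortably accommodates the at most $N+1$ entries of the tuple. Since $A_0, A_1 \subseteq [A, A']$ are antichains and $I_0 \cap [A, A'] = \emptyset$, the resulting $\overline{j}$ has distinct entries, with $A_0$-entries pairwise incomparable (matching the $A_1$-entries), $I_0$-entries and their mutual relations preserved, and cross-relations matching by the structural claim since each $A_1$-entry and its $A_0$-image lie in the same region. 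Thus $\qftp_\unlhd(\overline{i}) = \qftp_\unlhd(\overline{j})$, and running this construction in both directions yields the stated equivalence.

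The main obstacle is the structural claim: it requires that the regions $X_{J, J'}$ capture every comparability between $[A, A']$ and $P^{1-t} - [A, A']$ exactly. This amounts to extracting from Corollary~\ref{Cor_AntichainBarrier} and its mirror for $A'$ that both cases (where $j \lhd A$ and where $A' \lhd j$) are entirely governed by $J$ and $J'$ respectively, together with the careful verification that the crossing comparabilities cannot arise.
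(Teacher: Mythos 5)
Your proposal is correct and takes essentially the same approach as the paper: build a quantifier-free-type-preserving injection from any $(N+1)$-tuple of $A_1 \cup I_0$ into $A_0 \cup I_0$ by fixing the $I_0$-entries and moving each $A_1$-entry to an $A_0$-entry in the same region $X_{J,J'}$ (possible by hypotheses (i)/(ii)), with the cross-relations to $I_0$ preserved because they are governed by the $J_0$/$J_1$-profiles as recorded in $(J,J')$. The paper's own proof uses exactly this induction, relying on the same barrier property of $J_0$ and $J_1$ that you extract from Corollary~\ref{Cor_AntichainBarrier} and its mirror; your explicit observation that the ``wrong-direction'' comparabilities ($i \lhd j$ when $j \lhd A$, etc.) are impossible is a useful elaboration of a point the paper leaves implicit.
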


\begin{proof}
 Fix $A_0$ and $A_1$ as above and $I_0 \subseteq P^{1-t} - [A, A']$.  Since the conditions are symmetric, suppose that $A_0 \cup I_0$ is homogeneous and we show that $A_1 \cup I_0$ is homogeneous.  Fix any $i_0, ..., i_N \in A_1 \cup I_0$ distinct and we will define $i'_\ell \in A_0 \cup I_0$ inductively on $\ell \le N$ so that the map $i_\ell \mapsto i'_\ell$ is an isomorphism of $S$-substructures.  First, if $i_\ell \in I_0$, then set $i'_\ell = i_\ell$.  Otherwise, fix $J, J'$ so that $i_\ell \in X_{J,J'}$ and choose $i'_\ell \in A_0 \cap X_{J,J'} - \{ i'_0, ..., i'_{\ell-1} \}$.  This exists by assumptions (i) or (ii).  Since $A_0$ and $A_1$ are both antichains, there is no relationship amongst the elements there.  For any $j \in I_0$, $j \lhd i_\ell$ (for $i_\ell \in X_{J,J'}$) if and only if $(\forall j' \in J_0)(j \unlhd j' \leftrightarrow j' \in J)$ for $j \unlhd J_0$ and likewise for $J_1 \unlhd j$ and $J'$.  But this holds if and only if $j \lhd i'_\ell$ as $i_\ell$ and $i'_\ell$ belong to the same $X_{J,J'}$.  Therefore, $i_\ell \mapsto i'_\ell$ is an isomorphism and, by $\Delta$-indiscernibility,
 \[
  \tp_\Delta(\overline{b}_{i_0}, ..., \overline{b}_{i_N}) = \tp_\Delta(\overline{b}_{i'_0}, ..., \overline{b}_{i'_N}).
 \]
 Since $A_0 \cup I_0$ is homogeneous, this implies that $A_1 \cup I_0$ is homogeneous.
\end{proof}

\begin{cor}\label{Cor_BoundedAddition1}
 Fix $A$ a maximal antichain and $t < 2$ such that $|A^{1-t}| \le N$.  There exists $A_0 \subseteq A$, with $|A_0| \le (N+1)2^{(2N+2)}$, such that, for any $I_0 \subseteq P^{1-t}$, $A \cup I_0$ is homogeneous if and only if $A_0 \cup I_0$ is homogeneous.
\end{cor}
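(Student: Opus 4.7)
The plan is to apply Lemma \ref{Lem_HomogeneityofRegions} in the degenerate case $A = A'$, using Lemma \ref{Lem_AntichainFix} to supply the sets that control the cells. First I would dispose of the trivial case $|A^t| \le N$: then $|A| \le 2N < (N+1) 2^{2N+2}$, so taking $A_0 = A$ already works. Otherwise $|A^t| > N$, and Lemma \ref{Lem_AntichainFix} yields $A'_0 \subseteq A$ with $|A'_0| \le 2N+1$ (containing all of $A^{1-t}$ by construction) and sets $J^-, J^+ \subseteq P^{1-t}$ of size at most $N$ each. Set $J_0 = A'_0 \cup J^-$ and $J_1 = A'_0 \cup J^+$ and consider the cell decomposition $X_{J, J'}$ of $[A, A] = A$ introduced in the paragraph preceding Lemma \ref{Lem_HomogeneityofRegions}.

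The key structural observation is that, because $A$ is itself an antichain, the cells meeting $A - A'_0$ form a very small family. For $i \in A - A'_0$ and $j \in A'_0 \subseteq A$, the relation $j \unlhd i$ would force $j = i$, which is impossible; hence the cell of $i$ satisfies $J \cap A'_0 = J' \cap A'_0 = \emptyset$ and is indexed by $(J \cap J^-, J' \cap J^+)$ alone, so there are at most $2^{|J^-|} \cdot 2^{|J^+|} \le 2^{2N}$ such cells. By the same reasoning, any cell containing an element of $A'_0$ must be a singleton consisting of that element, and is therefore captured automatically once we include all of $A'_0$ in $A_0$. I would then build $A_0$ by taking $A'_0$ together with $\min(|A \cap X_{J,J'}|, N+1)$ representatives from each of the at-most-$2^{2N}$ cells meeting $A - A'_0$, which gives $|A_0| \le (2N+1) + (N+1) \cdot 2^{2N} \le (N+1) \cdot 2^{2N+2}$.

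By construction, for every pair $(J, J')$ either $|A_0 \cap X_{J, J'}| = |A \cap X_{J, J'}|$ or both sides exceed $N$, so the hypothesis of Lemma \ref{Lem_HomogeneityofRegions} is satisfied for the antichains $(A, A_0)$. Given an arbitrary $I_0 \subseteq P^{1-t}$, decompose $I_0 = (I_0 \cap A^{1-t}) \cup (I_0 - A)$. Since $A^{1-t} \subseteq A'_0 \subseteq A_0 \subseteq A$, the first piece is absorbed on both sides: $A \cup I_0 = A \cup (I_0 - A)$ and $A_0 \cup I_0 = A_0 \cup (I_0 - A)$. The second piece lies in $P^{1-t} - [A, A]$, so Lemma \ref{Lem_HomogeneityofRegions} directly yields that $A \cup (I_0 - A)$ is homogeneous if and only if $A_0 \cup (I_0 - A)$ is homogeneous, finishing the proof. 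The one step requiring attention is the absorption argument, which depends essentially on Lemma \ref{Lem_AntichainFix} being applied so that $A^{1-t} \subseteq A'_0$; the rest of the argument is bookkeeping and a routine count.
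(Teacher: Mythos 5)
Your proof is correct and follows the same route as the paper: apply Lemma~\ref{Lem_HomogeneityofRegions} to the degenerate interval $[A,A]=A$, using Lemma~\ref{Lem_AntichainFix} to supply $J_0$ and $J_1$, and take a bounded number of representatives from each cell $X_{J,J'}$. Your version is in fact more careful than the paper's one-line argument: you correctly take $N+1$ (not $N$) representatives per cell so that condition~(ii) of Lemma~\ref{Lem_HomogeneityofRegions} is actually met when a cell is large, you justify that $I_0\cap A$ can be absorbed because $A^{1-t}\subseteq A_0$, and your observation that cells meeting $A-A'_0$ are indexed only by subsets of $J^-$ and $J^+$ is exactly what is needed to land within the stated bound $(N+1)2^{2N+2}$.
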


\begin{proof}
 If $|A| \le 2N$, set $A_0 = A$ and we are done.  So we may assume $|A| > 2N$ and $|A^t| > N$.  Use Lemma \ref{Lem_HomogeneityofRegions} on $A = [A,A]$.  Then indeed any subset of $A = [A,A]$ is an antichain.  For each $J$, $J'$, choose $A_{J,J'}$ maximal in $X_{J,J'}$ such that $|A_{J,J'}| \le N$.  Then, taking $A_0 = \bigcup_{J,J'} A_{J,J'}$ suffices.
\end{proof}

\begin{lem}\label{Lem_DefiningAntichain}
 If $t < 2$ and $A \subseteq P^t$ is an antichain, then there exists a uniform formula $\gamma_A(\overline{y})$, over at most $N+(N+1)2^{(2N+2)}$ elements of $\{ \overline{b}_i : i \in P \}$, such that
 \begin{itemize}
  \item [(i)] for all $i \in A$, $\models \gamma_A(\overline{b}_i)$, and
  \item [(ii)] for all $i \in P$, if $\models \gamma_A(\overline{b}_i)$, then $i \in P^t$.
 \end{itemize}
 That is, antichains are roughly definable.
\end{lem}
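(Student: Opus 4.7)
The plan is to split on $|A|$. In the trivial case $|A| \le N$, take $\gamma_A(\overline{y}) := \bigvee_{i \in A} (\overline{y} = \overline{b}_i)$ with parameters $\{\overline{b}_i : i \in A\}$; this uses at most $N$ parameters, clearly satisfies (i), and for (ii), if $\overline{b}_j = \overline{b}_i$ for some $i \in A$, then $\models \varphi(\overline{a};\overline{b}_j) \leftrightarrow \varphi(\overline{a};\overline{b}_i)$ forces $j \in P^t$.

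For the main case $|A| > N$, I extend $A$ to a maximal antichain $A^* \subseteq P$. Antichains are homogeneous, and since $|(A^*)^t| \ge |A| > N$, Lemma \ref{Lem_HomogenousColoring} forces $|(A^*)^{1-t}| \le N$. This lets me apply Corollary \ref{Cor_BoundedAddition1} to $A^*$, obtaining $A_0^* \subseteq A^*$ of size at most $(N+1)2^{(2N+2)}$ such that for every $I_0 \subseteq P^{1-t}$, $A^* \cup I_0$ is homogeneous iff $A_0^* \cup I_0$ is homogeneous. Next, I choose $W \subseteq P^{1-t}$ maximal such that $A^* \cup W$ is homogeneous. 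Since every element of $(A^*)^{1-t}$ already lies in $A^*$, adding it to $W$ does not change $A^* \cup W$; so the maximality of $W$ forces $(A^*)^{1-t} \subseteq W$. Combined with Lemma \ref{Lem_HomogenousColoring} applied to the homogeneous $A^* \cup W$ (whose $t$-part contains $A$, of size $> N$), this gives $|W| \le N$.

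I then define $\gamma_A(\overline{y})$, with parameters from $A_0^* \cup W$ (of size at most $(N+1)2^{(2N+2)} + N$), as the conjunction of two conditions: (a) for every ordered distinct $(N+1)$-tuple $(\overline{u}_0, \ldots, \overline{u}_N)$ from $A_0^* \cup W \cup \{\overline{y}\}$ that involves $\overline{y}$, and every $s \in {}^{N+1}2$, the truth value of $\exists \overline{x} \bigwedge_l \varphi(\overline{x}; \overline{u}_l)^{s(l)}$ matches the common value $q(s)$ that $(N+1)$-tuples realize in the homogeneous set $A^* \cup W$; and (b) $\overline{y} \ne \overline{b}_j$ for each $j \in W$. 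For $i \in A$, condition (a) holds since $A_0^* \cup W \cup \{i\} \subseteq A^* \cup W$, and (b) holds because $\overline{a}$ separates $\overline{b}_i$ (color $t$) from each $\overline{b}_j$ with $j \in W$ (color $1-t$). Conversely, if $\gamma_A(\overline{b}_i)$ holds and $i \in P^{1-t}$, then (b) yields $i \notin W$, so $i \notin (A^*)^{1-t}$ (since $(A^*)^{1-t} \subseteq W$), hence $i \notin A^*$. Condition (a) then forces $A_0^* \cup W \cup \{i\}$ to be homogeneous, and Corollary \ref{Cor_BoundedAddition1} applied with $I_0 = W \cup \{i\} \subseteq P^{1-t}$ lifts this to homogeneity of $A^* \cup W \cup \{i\}$, contradicting the maximality of $W$.

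The main obstacle is parameter accounting: a naive approach would separately include $A_0^*$, $W$, and $(A^*)^{1-t}$ as exclusions, summing to $(N+1)2^{(2N+2)} + 2N$ parameters, which would exceed the stated bound. The crucial observation is that choosing $W$ maximal automatically absorbs $(A^*)^{1-t}$ into $W$, so the single block of inequalities in (b) simultaneously excludes both $W$ and $(A^*)^{1-t}$ for free, keeping the count at $N + (N+1)2^{(2N+2)}$.
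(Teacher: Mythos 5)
Your proposal is correct and follows essentially the same strategy as the paper's proof: handle $|A|\le N$ by a disjunction of equalities, extend to a maximal antichain whose non-$t$ part has size $\le N$, invoke Corollary~\ref{Cor_BoundedAddition1} to get a bounded $A_0^*$, and then let a maximal ``extra'' set from $P^{1-t}$ serve as exclusions while a homogeneity-type condition over $A_0^* \cup W$ does the separating. The one cosmetic difference is that you take $W$ maximal so that $A^* \cup W$ is homogeneous, whereas the paper takes $I_0$ maximal so that $A_0 \cup I_0$ is homogeneous; by Corollary~\ref{Cor_BoundedAddition1} these are the same family of subsets of $P^{1-t}$, so the choices coincide. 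Your observation that maximality forces $(A^*)^{1-t}\subseteq W$ is correct and clean, though the paper does not actually need it to hit the parameter bound (all its parameters already come from $A_0\cup I_0$, and the paper's $I_0$ likewise absorbs $(A')^{1-t}$ automatically); your formulation also avoids the paper's separate disjunct $\bigvee_{i\in A_0^t}\overline{y}=\overline{b}_i$ by noting that for $i\in A$ both (a) and (b) hold directly. In short: same route, a slightly tidier presentation.
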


\begin{proof}
 If $|A| \le N$, set $\gamma_A(\overline{y}) = \bigvee_{i \in A} \overline{y} = \overline{b}_i$.  So we may assume $|A| > N$, hence we can expand it to a maximal antichain $A' \subseteq P$ where $\Maj(A') = t$.  Fix $A_0$ as in Corollary \ref{Cor_BoundedAddition1} and choose $I_0 \subseteq P^{1-t}$ so that $A_0 \cup I_0$ is homogeneous and $I_0$ is maximal such.  By Lemma \ref{Lem_HomogenousColoring}, $|I_0| \le N$.

 Now, for any $j \in (P^{1-t} - (A_0 \cup I_0))$, since $A_0 \cup I_0 \cup \{ j \}$ is not homogeneous, there exists $i_0, ..., i_{N-1}, i_N \in A_0 \cup I_0$ distinct and $i'_0, ..., i'_{N-1} \in A_0 \cup I_0$ distinct such that
 \[
  \tp_\Delta(\overline{b}_{i_0}, ..., \overline{b}_{i_N}) \neq \tp_\Delta(\overline{b}_{i'_0}, ..., \overline{b}_{i'_{N-1}}, \overline{b}_j).
 \]
 Therefore, there exists $\delta \in \pm \Delta$ so that
 \[
  \models \delta(\overline{b}_{i_0}, ..., \overline{b}_{i_N}) \wedge \neg \delta(\overline{b}_{i'_0}, ..., \overline{b}_{i'_{N-1}}, \overline{b}_j).
 \]
 However, fix any $i \in A' - A_0$.  Then, by Corollary \ref{Cor_BoundedAddition1},
 \[
  \tp_\Delta(\overline{b}_{i_0}, ..., \overline{b}_{i_N}) = \tp_\Delta(\overline{b}_{i'_0}, ..., \overline{b}_{i'_{N-1}}, \overline{b}_i),
 \]
 hence $\models \delta(\overline{b}_{i'_0}, ..., \overline{b}_{i'_{N-1}}, \overline{b}_i)$.  Thus, the formula $\delta(\overline{b}_{i'_0}, ..., \overline{b}_{i'_{N-1}}, \overline{y})$ distinguishes $j$ from all $i \in A - A_0$.  Let
 \begin{align*}
  \gamma'(\overline{y}) = \bigwedge \Bigl\{ & \delta(\overline{b}_{i_0}, ..., \overline{b}_{i_{N-1}}, \overline{y}) : \delta \in \pm \Delta, i_0, ..., i_{N-1} \in A_0 \cup I_0, \\ & \models \delta(\overline{b}_{i_0}, ..., \overline{b}_{i_{N-1}}, \overline{b}_{i^*}) \text{ for some } i^* \in (A_0 \cup I_0) - \{ i_0, ..., i_{N-1} \} \Bigr\}.
 \end{align*}
 Finally, let
 \[
  \gamma_A(\overline{y}) = \left( \gamma'(\overline{y}) \wedge \bigwedge_{i \in A_0^{1-t} \cup I_0} \overline{y} \neq \overline{b}_i \right) \vee \bigvee_{i \in A_0^t} \overline{y} = \overline{b}_i.
 \]
 Then, for all $i \in A$, either $i \in A_0^t$ and clearly $\models \gamma_A(\overline{b}_i)$, or $i \in A - A_0 \subseteq A' - A_0$, in which case $\models \gamma'(\overline{b}_i)$ by construction.  Therefore, condition (i) holds.  Similarly, if $j \in P^{1-t}$, then either $j \in A_0^{1-t} \cup I_0$ and clearly $\models \neg \gamma_A(\overline{b}_j)$, or $j \notin A_0^{1-t} \cup I_0$, in which case $\models \neg \gamma'(\overline{b}_j)$ by construction.  This gives us condition (ii).
\end{proof}

In the next two subsections, we break the problem up into two cases depending on whether or not chains are homogeneous.  As in the previous section, let $M = (2N+1)(N+1)$.

\subsection{When Chains are Homogeneous}

For this subsection, we assume:

\noindent\textbf{Case 1.} For any $i_0 \lhd ... \lhd i_N$ from $P$, for all $\sigma \in S_{N+1}$ (the group of permutations on $N+1$), we have that
\[
 \tp_\Delta(\overline{b}_{i_0}, ..., \overline{b}_{i_N}) = \tp_\Delta(\overline{b}_{i_{\sigma(0)}}, ..., \overline{b}_{i_{\sigma(N)}}).
\]
That is, chains are homogeneous.  Thus, for any chain $C \subseteq P$, there exists $t < 2$ such that $|C^t| \le N$.

\begin{lem}\label{Lem_SimpleDecomposition}
 Under the assumption of Case 1, there exists $t < 2$ such that $P^t$ is a union of $\le M(2N+2)$ chains and $\le N(2N+2)$ antichains.
\end{lem}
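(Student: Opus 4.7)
My plan is to apply Theorem~\ref{Thm_PosetDefinability} to the $N$-indiscernible coloring $f$ of $P$ induced by $\overline{a}$ via Lemma~\ref{Lem_IndiscChainAntiChains}. This furnishes maximal antichains $A_0 \lhd \cdots \lhd A_{K-1}$ with $K \le 2N+2$, intervals $P_n = [A_{n-1}, A_n)$ for $n \in \{0, \ldots, K\}$ (with $A_{-1} = -\infty$ and $A_K = \infty$), and, for each $n$, a family of at most $M$ chains $C_{n,\ell}$ ($\ell < M$) covering the minority-colored side $(P_n)^{n \bmod 2}$. I then pick $t \in \{0,1\}$ and cover $P^t$ interval by interval, with each interval contributing at most $M$ chains and at most $N$ antichains; summing over the $\le 2N+2$ intervals yields the totals $M(2N+2)$ and $N(2N+2)$ from the statement.

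Within an interval $P_n$, the minority-colored side is already taken care of by the chains $C_{n,\ell}$ from Theorem~\ref{Thm_PosetDefinability}. For the majority-colored side $(P_n)^{(n+1) \bmod 2}$, I plan to cover by $\le N$ antichains via Mirsky's theorem (the dual of Theorem~\ref{Thm_BoundAntiChainBoundChain}), after bounding the length of chains contained entirely in that majority side by $N$. Here Case~1 combines with $\ID(\varphi) = N$: for any chain $(i_0 \lhd \cdots \lhd i_N)$ in $P$, the tuple $(\overline{b}_{i_0}, \ldots, \overline{b}_{i_N})$ cannot be $\varphi$-independent, so some pattern $s \in {}^{N+1}2$ makes $\exists \overline{x} \bigwedge_{\ell \le N} \varphi(\overline{x}; \overline{b}_{i_\ell})^{s(\ell)}$ fail. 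Case~1 makes the $\Delta$-type of the chain permutation-invariant, so whether the formula fails depends only on the number of $1$'s in $s$; and by $\Delta$-indiscernibility of the sequence indexed by $P$, the set $K^* \subseteq \{0, \ldots, N+1\}$ of forbidden counts is the same for every length-$(N+1)$ chain in $P$. Choosing $t$ so that $t$-monochromatic tuples correspond to a forbidden count (so $t = 0$ if $0 \in K^*$, $t = 1$ if $N+1 \in K^*$) then forces chains in $P^t$ to have length $\le N$ and lets Mirsky finish.

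The main obstacle will be the subcase $K^* \cap \{0, N+1\} = \emptyset$, in which monochromatic chains of both colors may persist globally and the clean global choice of $t$ above is unavailable. To handle this, I would run the argument locally inside each $P_n$: a hypothetical long monochromatic majority chain $C \subseteq (P_n)^{(n+1) \bmod 2}$ can be extended by minority-colored elements of the adjacent maximal antichain $A_n$ (which has more than $M$ such elements, by the construction of $A_n$ in Lemma~\ref{Lem_IndiscBreakdown}), producing chains whose count of $1$'s can be steered into $K^*$. The resulting contradiction via Case~1's permutation invariance forces $|C| \le N$ interval by interval. The technical heart of the proof is carrying out this extension carefully---selecting minority elements of $A_n$ that actually extend $C$ as a chain in $P$, and verifying that the mixed coloring patterns obtained realize some forbidden count in $K^*$.
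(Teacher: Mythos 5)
The proposal starts from the same place as the paper (the decomposition of Theorem~\ref{Thm_PosetDefinability} into $\le 2N+2$ intervals), but then takes a genuinely different route, and unfortunately that route has a real gap exactly where you anticipated trouble.

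Your clean idea -- defining $K^*$ as the set of ``forbidden counts'' for an $(N+1)$-element chain, noting that by $\Delta$-indiscernibility all $(N+1)$-chains share a $\Delta$-type and by Case~1 consistency depends only on the count of ones, and then taking $t=0$ if $0\in K^*$ or $t=1$ if $N+1\in K^*$ -- is correct as far as it goes, and it actually gives a conclusion \emph{stronger} than the lemma (all of $P^t$ covered by $\le N$ antichains, no chains needed). The problem is that the subcase $K^*\cap\{0,N+1\}=\emptyset$ genuinely occurs (for instance with $\varphi(x;y)$ an equivalence relation $E$ on a structure with $\ge 3$ classes and $P$ consisting of two incomparable chains indexing two $E$-classes: there $N=1$ and $K^*=\{1\}$). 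Note that in Case~1, Lemma~\ref{Lem_HomogenousColoring} only tells you that every chain $C$ satisfies $|C^0|\le N$ or $|C^1|\le N$; this is perfectly compatible with arbitrarily long \emph{monochromatic} chains of both colors coexisting (in disjoint incomparable parts of $P$), so there is no route from the Case~1 hypothesis alone to ``chains in $P^t$ have length $\le N$'' for a fixed $t$.

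The fallback sketch for this remaining case does not hold up. Since $A_n$ is an antichain, a chain $C\subseteq[A_{n-1},A_n)$ can be extended upward by \emph{at most one} element of $A_n$ (and downward by at most one element of $A_{n-1}$), so the count of ones in a length-$(N+1)$ window can only be shifted by $\pm1$ or $\pm2$ -- nowhere near enough to ``steer'' from $0$ or $N+1$ into an interior forbidden value, and it is not even guaranteed that the available elements of $A_n$ above $\max(C)$ have the minority color you want. Moreover, the intermediate target ``for every $n$, the majority side $(P_n)^{(n+1)\bmod 2}$ has chains of length $\le N$'' is strictly stronger than the lemma (it would give the conclusion for \emph{both} values of $t$ simultaneously) and is not established. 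The paper's proof avoids all of this by an argument of a different shape: it locates the \emph{first} interval $P_n$ whose majority side fails to be $\le N$ antichains plus $\le M$ chains, extracts from that failure a large $(1-t)$-colored antichain $A$ strictly below $A_n$, and then shows that any large $t$-colored antichain $A'$ sitting sufficiently high above $A_n$ would, via Lemma~\ref{Lem_IndiscChainAntiChains}~(i), produce an $i\in A$ and $i'\in A'$ with $i\lhd i'$, which after padding with $N$ levels below $i$ and $N$ levels above $i'$ yields a chain with more than $N$ elements of each color, contradicting homogeneity of chains. That cross-interval interaction between a big low antichain of one color and a big high antichain of the other is the essential mechanism your proposal is missing, and it is what controls everything above the first bad interval without ever needing to bound monochromatic chain lengths directly.
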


\begin{proof}
 Let $A_0, ..., A_{K-1} \subseteq P$ as given by Theorem \ref{Thm_PosetDefinability} and let $P_n = [A_{n-1}, A_n)$ (where $A_{-1} = - \infty$ and $A_K = \infty$).  We already have that, for all $n \le K$, $P_n^{n (\mathrm{mod}\ 2)}$ is a union of $\le M$ chains by Theorem \ref{Thm_PosetDefinability}.  Fix $n \le K$ minimal such that $P_n^{(n+1) (\mathrm{mod}\ 2)}$ is not equal to a union of $\le N$ antichains and $\le M$ chains.  If $n = K$, then we can take $t = K (\mathrm{mod}\ 2)$ (as each $P_n^t$ is a union of $\le M$ chains or $\le M$ chains and $\le N$ antichains).  So we may assume that $n < K$.  We claim that $t = n (\mathrm{mod}\ 2)$ still works.

 Consider the antichains $\Lev_\ell^-(P_n^{1-t})$ for $\ell < N$ and let $P^*_n = P_n^{1-t} - ( \bigcup_{\ell < N} \Lev_\ell^-(P_n^{1-t}) )$.  Since $P_n^{1-t}$ is not the union of $\le N$ antichains (for example, $\Lev_\ell^-(P_n^{1-t})$ for $\ell < N$) and $\le M$ chains, $P^*_n$ cannot be the union of $\le M$ chains.  By Theorem \ref{Thm_BoundAntiChainBoundChain}, there exists $A \subseteq P^*_n$ an antichain with $|A| > M$.

 Now choose $A^* \subseteq P$ a maximal antichain with $|(A^*)^t| > M$, $A_n \lhd A^*$, and $A^*$ $\lhd$-maximal such.  This exists since $n < K$.  Thus, $(A^*, \infty)^t$ is a union of $\le M$ antichains and, for all $n' \le n$, $P_{n'}^t$ is a union of $\le M$ chains and $\le N$ antichains.  Thus, it suffices to check this condition for $[A_n, A^*]^t$.

 Let $P^{**} = [A_n, A^*]^t - \bigcup_{\ell < N} \Lev_\ell^+([A_n,A^*]^t)$.  We claim that $P^{**}$ has no antichain of size $> M$.  If it did, then fix $A' \subseteq P^{**}$ such an antichain.  By construction, $|A| > M$, $|A'| > M$, $A^{1-t} = A$, and $(A')^t = A'$.  Therefore, by Lemma \ref{Lem_IndiscChainAntiChains} (i), there exists $i \in A$ and $i' \in A'$ such that $i \lhd i'$ or $i' \lhd i$.  However, $i \lhd A_n$ and $A_n \unlhd i'$.  Therefore, $i \lhd i'$.  By the definition of levels, there exists $i_\ell \in \Lev_\ell^-(P_n^{1-t})$ such that $i_0 \lhd i_1 \lhd ... \lhd i_{N-1} \lhd i$ and there exists $i'_\ell \in \Lev_\ell^+([A_n,A^*]^t)$ such that $i' \lhd i'_0 \lhd i'_1 \lhd ... \lhd i'_{N-1}$.  This produces a chain $C$ so that $|C^0| > N$ and $|C^1| > N$, contrary to homogeneity of chains.

 Therefore, all antichains of $P^{**}$ have size $\le M$.  By Theorem \ref{Thm_BoundAntiChainBoundChain}, $P^{**}$ is the union of $\le M$ chains.  Therefore, $[A_n, A^*]^t$ is the union of $\le N$ chains and $\le M$ antichains.  Putting this together, we see that $P^t$ is the union of $\le M(2N+2)$ chains and $\le N(2N+2)$ antichains.
\end{proof}

\begin{lem}\label{Lem_DefiningChain}
 Under the assumption of Case 1, if $t < 2$ and $C \subseteq P^t$ is a chain, then there exists a uniform formula $\gamma_C(\overline{y})$, over at most $N \cdot (2(N+1)^2+N)$ elements of $\{ \overline{b}_i : i \in P \}$, such that
 \begin{itemize}
  \item [(i)] for all $i \in C$, $\models \gamma_C(\overline{b}_i)$, and
  \item [(ii)] for all $i \in P$, if $\models \gamma_C(\overline{b}_i)$, then $i \in P^t$.
 \end{itemize}
 That is, under Case 1, chains are roughly definable.
\end{lem}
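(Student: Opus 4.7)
The plan is to adapt the template of Lemma \ref{Lem_DefiningAntichain}, replacing the single antichain with the closed intervals supplied by Lemma \ref{Lem_ChainFix} and exploiting Case 1 to handle the chain structure inside each interval.

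First, I would apply Lemma \ref{Lem_ChainFix} to $C \subseteq P^t$ to obtain endpoints $i_0 \unlhd i'_0 \unlhd \cdots \unlhd i_K \unlhd i'_K$ from $C$ with $K \le N$ and $C \subseteq \bigcup_{n \le K} [i_n, i'_n]_P \subseteq P^t$. Because this union is already contained in $P^t$, it suffices to roughly define each interval $I_n = [i_n, i'_n]_P$ by a formula $\gamma_n(\overline{y})$ and take $\gamma_C := \bigvee_{n \le K} \gamma_n$; the per-interval parameter cost is thus multiplied by at most $N+1$.

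Second, fix a single interval $I = [a, b]_P \subseteq P^t$. I extend $\{a\}$ and $\{b\}$ to maximal antichains $A \unlhd A'$ of $P$, so $I \subseteq [A, A']$. Applying Lemma \ref{Lem_AntichainFix} to $A$ and $A'$ (arranged so that $\Maj(A) = \Maj(A') = t$) produces bounded barrier sets $A_0 \cup J^-$ and $A'_0 \cup J^+$, each of size at most $3N+1$, which together determine how elements of $P^{1-t}$ outside $[A, A']$ interact with $I$. Using the partition $\{X_{J, J'}\}$ from Lemma \ref{Lem_HomogeneityofRegions}, I select from each non-empty class $X_{J, J'} \cap I$ a representative subset of size at most $N+1$; these representatives, together with $a$ and $b$, form the bounded set $I_0 \subseteq I$ which will play the role of $A_0$ from Corollary \ref{Cor_BoundedAddition1}.

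Third, the key claim is a chain analog of Corollary \ref{Cor_BoundedAddition1}: for any $I_1 \subseteq P^{1-t}$, the set $I \cup I_1$ is homogeneous if and only if $I_0 \cup I_1$ is homogeneous. With this in hand the remainder mirrors Lemma \ref{Lem_DefiningAntichain}: pick $I_1 \subseteq P^{1-t}$ maximal such that $I_0 \cup I_1$ is homogeneous, so $|I_1| \le N$ by Lemma \ref{Lem_HomogenousColoring}; for each $j \in P^{1-t} - I_1$ the failure of homogeneity of $I_0 \cup I_1 \cup \{j\}$ yields a witness $\delta \in \pm \Delta$ and a tuple from $I_0 \cup I_1$ distinguishing $j$; and the chain analog of Corollary \ref{Cor_BoundedAddition1} ensures the same $\delta$-formula remains satisfied by $\overline{b}_i$ for every $i \in I - I_0$. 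Assemble $\gamma_I$ as a conjunction of these $\delta$-formulas, disjoined with equality clauses covering $I_0 \cup I_1$, as in the proof of Lemma \ref{Lem_DefiningAntichain}.

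The main obstacle is the chain analog of Corollary \ref{Cor_BoundedAddition1}. The antichain proof exploited the fact that $A$ has no internal $\unlhd$-relations, so that the $\{X_{J, J'}\}$ partition alone classifies the $\Delta$-types of $(N+1)$-tuples. For $I$ this is more delicate because $I$ contains internal chains, but Case 1 is the precise hypothesis that compensates: it collapses every chain substructure of length at most $N+1$ to a single $\Delta$-type, so replacing an element of $I$ by a same-class representative in $I_0$ preserves the $\Delta$-type of any $(N+1)$-tuple, provided $|X_{J, J'} \cap I_0|$ either matches $|X_{J, J'} \cap I|$ or exceeds $N$, exactly as in Lemma \ref{Lem_HomogeneityofRegions}. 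The stated parameter bound $N \cdot (2(N+1)^2 + N)$ then arises from a careful count over at most $N+1$ intervals, each contributing at most $2(N+1)^2$ parameters from the $X_{J, J'}$-representatives and $N$ from $I_1$.
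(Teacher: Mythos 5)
The high-level template (reduce to intervals via Lemma \ref{Lem_ChainFix}, find a bounded core, take a maximal homogeneous $P^{1-t}$-addition, then imitate the formula assembly of Lemma \ref{Lem_DefiningAntichain}) is the right one, but the central step --- your proposed ``chain analog of Corollary \ref{Cor_BoundedAddition1}'' --- does not go through as described, and this is precisely the point where the paper's proof diverges from yours.

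Two problems. First, Lemma \ref{Lem_HomogeneityofRegions} and Corollary \ref{Cor_BoundedAddition1} only govern $I_1 \subseteq P^{1-t} - [A,A']$: the $X_{J,J'}$ classes record relations with the barrier sets $J_0, J_1$, which by Lemma \ref{Lem_AntichainFix} and Corollary \ref{Cor_AntichainBarrier} determine comparability only for $j$ lying \emph{outside} $[A,A']$. A $j \in P^{1-t} \cap [A,A']$ (entirely possible, since $[A,A']$ has interior once $A \ne A'$) can be related to the chain $I$ in a way the $X_{J,J'}$ partition does not see. Your key claim, stated for all $I_1 \subseteq P^{1-t}$, therefore cannot be deduced from these lemmas. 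Second, and more fundamentally, the $X_{J,J'}$ classes are insensitive to \emph{position along} $I$: a $j \in P^{1-t}$ can sit above an initial segment of $I$ and be incomparable to the rest, and the two halves may lie in the same $X_{J,J'}$ class. Replacing an element of $I$ by a same-class representative from your fixed $I_0$ can then change its $\unlhd$-relation to $j$, so the map $i_\ell \mapsto i'_\ell$ fails to be an $S$-isomorphism and $\Delta$-indiscernibility cannot be invoked. Case 1 does not repair this: it says $\Delta$-types of chain tuples are permutation-invariant, which is a consequence you may use \emph{after} producing an $S$-isomorphism; it does not let you dispense with one.

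The paper's proof sidesteps both issues by never fixing the bounded core in advance. It runs an iterative construction: whenever a new $j_n \in P^{1-t}$ can be homogeneously added, it enlarges the core by the local ``neighborhood'' $C'_{j_n}$ of $\le 2(N+1)$ $C$-consecutive elements straddling $j_n$'s crossing point $i^\pm_{j_n}$. These neighborhoods are exactly the positional data that the $X_{J,J'}$ classification omits, and they are what make the ``push to the nearest element of $C_{n-1}$'' map an $S$-isomorphism. Since each step adds one element of $P^{1-t}$ to a homogeneous set, Lemma \ref{Lem_HomogenousColoring} caps the number of steps at $N$, which is how the bounded core $C^*$ stays small. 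If you want to salvage your plan, you would need to replace the $X_{J,J'}$-based choice of $I_0$ with such a crossing-point-aware core; as written, the argument has a genuine gap.
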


\begin{proof}
 This is similar to the proof of Lemma \ref{Lem_DefiningAntichain}, noting that we are assuming chains are homogeneous.  Instead of having some small $C_0 \subseteq C$ which suffices to determine homogeneity for any $I_0 \subseteq P^{1-t}$ as in Corollary \ref{Cor_BoundedAddition1} for antichains, we will have to build $C_n$ as we go along.  By Lemma \ref{Lem_ChainFix}, we may assume that $C \subseteq [i,i']_P \subseteq P^t$ for some $i, i' \in C$ (this only decomposes $C$ into at most $N$ parts).  We may also assume that $|C| > N$, or else we just use $\gamma_C(\overline{y}) = \bigvee_{i \in C} \overline{y} = \overline{b}_i$.  For any $j \in P^{1-t}$, we have exactly one of three possibilities:
 \begin{itemize}
  \item [(i)] $\{ i, j \}$ is an antichain for all $i \in C$,
  \item [(ii)] there exists $i^-_j \in C$ such that $i^-_j \lhd j$ and $i^-_j$ is $\lhd$-maximal such, or
  \item [(iii)] there exists $i^+_j \in C$ such that $j \lhd i^+_j$ and $i^+_j$ is $\lhd$-minimal such.
 \end{itemize}
 In case (i), define $C'_j = \emptyset$.  In case (ii), define $C'_j$ to be the set of $N+1$ $C$-consecutive elements $\unlhd i^-_j$ and the $N+1$ $C$-consecutive elements $\rhd i^-_j$.  In case (iii), define $C'_j$ to be the set of $N+1$ $C$-consecutive elements $\lhd i^+_j$ and the $N+1$ $C$-consecutive elements $\unrhd i^+_j$.  Define $C_{-1}$ to be the set of $N+1$ $C$-initial elements and $N+1$ $C$-final elements.  Now we begin our construction of $j_0, j_1, ... \in P^{1-t}$ and $C_{-1} \subseteq C_0 \subseteq C_1 \subseteq ... \subseteq C$ (with $|C_n| \le 2(N+1)(n+1)$) as follows:

 Suppose that there exists $j \in P^{1-t} - \{ j_0, ..., j_{n-1} \}$ such that $C_{n-1} \cup \{ j_0, ..., j_{n-1}, j \}$ is homogeneous.  Let $j_n = j$ be any such and let $C_n = C_{n-1} \cup C'_j$.  We claim that, in fact, $C_n \cup \{ j_0, ..., j_n \}$ is homogeneous.  Given $i_0, ..., i_N \in C_n \cup \{ j_0, ..., j_n \}$, to get $i'_0, ..., i'_N \in C_{n-1} \cup \{ j_0, ..., j_n \}$ that are isomorphic (under $i_\ell \mapsto i'_\ell$), we fix the elements of $\{ j_0, ..., j_n \}$ and we push the elements inside $C'_{j_n}$ away to the nearest elements of $C_{n-1}$.  Then, the isomorphism yields
 \[
  \tp_\Delta(\overline{b}_{i_0}, ..., \overline{b}_{i_n}) = \tp_\Delta(\overline{b}_{i'_0}, ..., \overline{b}_{i'_n}),
 \]
 as desired.

 Therefore, this construction halts after at most $N$ steps, producing $I_0 = \{ j_0, ..., j_{K-1} \}$ and $C^* = C_K$ for $K \le N$.  Notice that, for any $j \in P^{1-t} - I_0$, $C \cup I \cup \{ j \}$ is homogeneous if and only if $C^* \cup I_0 \cup \{ j \}$ is homogeneous, which always fails by maximality of $I_0$.  The remainder of this proof follows exactly as the proof of Lemma \ref{Lem_DefiningAntichain}.
\end{proof}

We now prove Theorem \ref{Thm_NIPPOIndisc} (i) $\Rightarrow$ (ii) under Case 1.

By Lemma \ref{Lem_SimpleDecomposition}, there exists $t < 2$, chains $C_n$ for $n < M(2N+2)$ and antichains $A_m$ for $m < N(2N+2)$ so that $P^t = \bigcup_n C_n \cup \bigcup_m A_m$.  For each $n$, let $\gamma^*_n = \gamma_{C_n}$ given by Lemma \ref{Lem_DefiningChain}.  For each $m$, let $\gamma^{**}_m = \gamma_{A_m}$ given by Lemma \ref{Lem_DefiningAntichain}.  Then take
\[
 \gamma(\overline{y}) = \bigvee_n \gamma^*_n(\overline{y}) \vee \bigvee_m \gamma^{**}_m(\overline{y}).
\]
Then, for any $i \in P$, $\models \gamma(\overline{b}_i)$ if and only if $i \in P^t$ if and only if $\models \varphi(\overline{a}; \overline{b}_i)^t$.  Thus, the formula $\gamma^t$ defines the $\varphi$-type $p = \tp_\varphi(\overline{a} / \{ \overline{b}_i : i \in P \})$ in a uniform manner, as desired.

\subsection{When Chains are Not Homogeneous}

For this subsection, we assume:

\noindent\textbf{Case 2.} There exists $i_0 \lhd ... \lhd i_N$ from $P$ and $\sigma \in S_{N+1}$ such that
\[
 \tp_\Delta(\overline{b}_{i_0}, ..., \overline{b}_{i_N}) \neq \tp_\Delta(\overline{b}_{i_{\sigma(0)}}, ..., \overline{b}_{i_{\sigma(N)}}).
\]
That is, chains are not homogeneous.

\begin{defn}\label{Defn_OrderHomogeneous}
 Fix $X \subseteq P$ and $\le_X$ a linear order on $X$.  We say that $(X; \le_X)$ is \emph{order homogeneous} if, for any $i_0 <_X ... <_X i_N$ from $X$ and $j_0 <_X ... <_X j_N$ from $X$, we have that
 \[
  \tp_\Delta(\overline{b}_{i_0}, ..., \overline{b}_{i_N}) = \tp_\Delta(\overline{b}_{j_0}, ..., \overline{b}_{j_N}).
 \]
 In other words, $\langle \overline{b}_i : i \in X \rangle$ is $\Delta$-indiscernible with respect to $(X; \le_X)$.
\end{defn}

For example, for any chain $C' \subseteq P$, $(C'; \unlhd |_{C'})$ is order homogeneous.  The following lemma follows from Lemma \ref{Lem_IndiscChainAntiChains} on the partial order $(X; \le_X)$:

\begin{lem}\label{Lem_OrderHomogenousColoring}
 For any $X \subseteq P$ and $\le_X$ such that $(X; \le_X)$ is order homogeneous, there does not exist $i_0 <_X i_1 <_X ... <_X i_{2N+1}$ from $X$ and $s < 2$ such that $i_n \in X^s$ if and only if $n$ is even.
\end{lem}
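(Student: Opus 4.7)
The plan is to reduce directly to Lemma \ref{Lem_IndiscChainAntiChains} by viewing $(X;\le_X)$ as a partial order (in fact a linear order) in its own right. The order homogeneity hypothesis is exactly the statement that the restricted sequence $\langle \overline{b}_i : i \in X \rangle$ is $\Delta$-indiscernible with respect to the index structure $(X;\le_X)$, so Lemma \ref{Lem_IndiscChainAntiChains} applies verbatim to this new index, not the original partial order $(P;\unlhd)$.

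Concretely, I would define $f_X : X \to 2$ by $f_X(i) = 1$ iff $\models \varphi(\overline{a};\overline{b}_i)$, noting that $f_X$ is just the restriction of the coloring $f$ from Lemma \ref{Lem_IndiscChainAntiChains} to $X$, so $X^s = \{i \in X : f_X(i) = s\}$. Applying Lemma \ref{Lem_IndiscChainAntiChains} to the partial order $(X;\le_X)$ and the $\Delta$-indiscernible sequence $\langle \overline{b}_i : i \in X \rangle$, I conclude that $f_X$ is an $N$-indiscernible coloring of $(X;\le_X)$ in the sense of Definition \ref{Defn_IndiscernibleColoring}.

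Now suppose for contradiction that such an alternating sequence $i_0 <_X i_1 <_X \cdots <_X i_{2N+1}$ existed with $i_n \in X^s$ iff $n$ is even. Then $f_X(i_n) = s$ when $n$ is even and $f_X(i_n) = 1-s$ when $n$ is odd, so $f_X(i_\ell) = 1 - f_X(i_{\ell+1})$ for every $\ell < 2N+1$. This is precisely the forbidden $(2N+1)$-fold alternation along the chain $i_0 <_X \cdots <_X i_{2N+1}$ of $(X;\le_X)$ ruled out by condition (ii) of Definition \ref{Defn_IndiscernibleColoring}, contradicting that $f_X$ is an $N$-indiscernible coloring.

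There is no real obstacle here: the only subtlety is to recognize that Lemma \ref{Lem_IndiscChainAntiChains} was stated for an arbitrary partial order index, so once order homogeneity upgrades the subsequence to a $\Delta$-indiscernible sequence over the new index $(X;\le_X)$, both parts of that lemma apply. We only need part (ii), which immediately yields the claim.
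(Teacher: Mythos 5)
Your proof is correct and matches the paper's own argument exactly: the paper also justifies this lemma by applying Lemma~\ref{Lem_IndiscChainAntiChains} to the partial order $(X;\le_X)$, using that order homogeneity means $\langle \overline{b}_i : i \in X\rangle$ is $\Delta$-indiscernible with respect to this new index. Your write-up just spells out the details of the one-line reference given in the paper.
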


Under the assumption of Case 2, for any $(X; \le_X)$ that is order homogeneous with $|X| \ge N+1$, $X$ is not homogeneous.  Therefore, by Lemma 1.3 of \cite{Mypaper2}, there exists $\ell < N$ and $\delta \in \pm \Delta$ such that, for all $i_0 <_X ... <_X i_N$ from $X$, we have that
\begin{equation}\label{Eq_OrderSensitive}
 \models \delta(\overline{b}_{i_0}, ..., \overline{b}_{i_N}) \wedge \neg \delta(\overline{b}_{i_0}, ..., \overline{b}_{i_{\ell-1}}, \overline{b}_{i_{\ell+1}}, \overline{b}_{i_\ell}, \overline{b}_{i_{\ell+2}}, ..., \overline{b}_{i_N}).
\end{equation}
That is, $\delta$ is order-sensitive at $\ell$.

\begin{lem}\label{Lem_IncomparableOrdering}
 Suppose that $i_0 \lhd ... \lhd i_N$ are from $P$ and $j \in P$ is such that one of the following conditions hold:
 \begin{itemize}
  \item [(i)] $\{ i_s, j \}$ is an antichain for all $s \le N$,
  \item [(ii)] for some $n \neq \ell$, $i_n \lhd j$ and $\{ i_s, j \}$ is an antichain for all $s > n$, or
  \item [(iii)] for some $n \neq \ell+1$, $j \lhd i_n$ and $\{ i_s, j \}$ is an antichain for all $s < n$.
 \end{itemize}
 Then the ordering $i_0 < i_1 < ... < i_\ell < j < i_{\ell+1} < ... < i_N$ is not order homogeneous.
\end{lem}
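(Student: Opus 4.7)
The plan is to argue by contradiction: assume $(X'; <)$ is order homogeneous, where $X' = \{i_0, \ldots, i_N, j\}$ carries the given linear ordering with $j$ strictly between $i_\ell$ and $i_{\ell+1}$. Since Case 2 applies and the $\lhd$-chain $\{i_0, \ldots, i_N\}$ is order homogeneous of size $N+1$ but not homogeneous, Lemma 1.3 of \cite{Mypaper2} supplies a formula $\delta \in \pm \Delta$ order-sensitive at position $\ell$ on this chain; by $\Delta$-indiscernibility of $\langle \overline{b}_i : i \in P \rangle$ on $P$, this order-sensitivity transfers to every $\lhd$-chain of length $N+1$ in $P$, so $\delta$ evaluates oppositely on a chain tuple and on its position-$\ell$ swap.

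The next step is to compare, under the order-homogeneity assumption, the two $<$-increasing $(N+1)$-tuples $V = (i_0, \ldots, i_N)$ (skipping $j$) and $T = (i_0, \ldots, i_\ell, j, i_{\ell+2}, \ldots, i_N)$ (skipping $i_{\ell+1}$) drawn from $X'$. Order homogeneity forces $\tp_\Delta(V) = \tp_\Delta(T)$, and in particular $\delta(V) = \delta(T)$. I would then perform the case analysis (i), (ii), (iii) to show instead that $\delta(T) = \neg \delta(V)$ via $\Delta$-indiscernibility on $P$, yielding the desired contradiction. The guiding idea is that the incomparability hypotheses on $j$ are strong enough to make $T$'s $\qftp_\unlhd$ in $P$ match (modulo $\Delta$-indiscernibility) the $\qftp_\unlhd$ of the position-$\ell$ swap $V^{\mathrm{sw}} = (i_0, \ldots, i_{\ell-1}, i_{\ell+1}, i_\ell, i_{\ell+2}, \ldots, i_N)$ of the chain, giving $\delta(T) = \delta(V^{\mathrm{sw}}) = \neg \delta(V)$. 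The exclusions $n \ne \ell$ in (ii) and $n \ne \ell+1$ in (iii) are exactly what prevent $j$ from being a ``natural chain-continuation'' at position $\ell+1$ (respectively $\ell$), i.e., the configurations in which $j$ would simulate $i_{\ell+1}$ (respectively $i_\ell$) and no contradiction could be extracted.

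The main obstacle is justifying $\delta(T) = \delta(V^{\mathrm{sw}})$ in each case. In Case (i), where $j$ is antichain with every $i_s$, one compares the $\qftp_\unlhd$ of $T$ with that of $V^{\mathrm{sw}}$ directly, exploiting the ``break'' in the $\lhd$-chain at position $\ell+1$ that both tuples exhibit, possibly by constructing an auxiliary element of $P$ via $\Delta$-indiscernibility to bridge between $T$ and $V^{\mathrm{sw}}$. In Cases (ii) and (iii) one splits further on whether $n < \ell$ or $n > \ell + 1$, and uses the antichain conditions between $j$ and $\{i_s : s > n\}$ (respectively $\{i_s : s < n\}$), together with transitivity of $\lhd$, to control how $j$ sits relative to the chain in $T$. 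The expected technique is to realize, by permutations and $\qftp_\unlhd$-preserving substitutions within $P$, the swap-at-$\ell$ configuration from $T$, so that order-sensitivity of $\delta$ at $\ell$ delivers the flip in sign.
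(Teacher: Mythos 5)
Your high-level scaffolding (contradiction against order homogeneity, appeal to the order-sensitivity of $\delta$ at position $\ell$, chaining $\Delta$-type equalities) is the right shape, but the central step is not right. You want $\delta(T)=\delta(V^{\mathrm{sw}})$, where $T=(i_0,\ldots,i_\ell,j,i_{\ell+2},\ldots,i_N)$ and $V^{\mathrm{sw}}=(i_0,\ldots,i_{\ell-1},i_{\ell+1},i_\ell,i_{\ell+2},\ldots,i_N)$, by arguing $\qftp_\unlhd(T)=\qftp_\unlhd(V^{\mathrm{sw}})$. That fails in general: for example in case (i), the coordinate at position $\ell+1$ of $V^{\mathrm{sw}}$ (namely $i_\ell$) is strictly $\lhd$ the coordinate at position $\ell$ (namely $i_{\ell+1}$), whereas the coordinate at position $\ell+1$ of $T$ (namely $j$) is $\unlhd$-incomparable to the coordinate at position $\ell$ (namely $i_\ell$). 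The quantifier-free $\unlhd$-types of the two tuples therefore differ, so $\Delta$-indiscernibility does not equate $\delta(T)$ with $\delta(V^{\mathrm{sw}})$, and there is no ``auxiliary element'' to bridge: $\Delta$-indiscernibility is a statement about types, not a mechanism for producing new witnesses in $P$.

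What conditions (i)--(iii) actually buy is an $\unlhd$-isomorphism between the two tuples that \emph{both contain $j$}: $T_1=(i_0,\ldots,i_{\ell-1},j,i_\ell,i_{\ell+2},\ldots,i_N)$ and $T_2=(i_0,\ldots,i_{\ell-1},j,i_{\ell+1},i_{\ell+2},\ldots,i_N)$, via the partial map fixing $i_0,\ldots,i_{\ell-1},j,i_{\ell+2},\ldots,i_N$ and sending $i_\ell\mapsto i_{\ell+1}$. The exclusions $n\neq\ell$ in (ii) and $n\neq\ell+1$ in (iii) are precisely what force $i_\ell$ and $i_{\ell+1}$ to relate to $j$ in the same way (both $\lhd j$, both $\rhd j$, or both incomparable with $j$); they already relate identically to the other $i_s$ by transitivity. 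Hence $\qftp_\unlhd(T_1)=\qftp_\unlhd(T_2)$, and $\Delta$-indiscernibility of $\langle \overline{b}_i : i\in P\rangle$ gives $\delta(T_1)=\delta(T_2)$. Order homogeneity of $X'$ then links each of these to the pure chain: $T_2$ and $V=(i_0,\ldots,i_N)$ are both $<$-increasing, so $\delta(T_2)=\delta(V)$; and $T_1,V^{\mathrm{sw}}$ are the $(\ell,\ell+1)$-swaps of the increasing tuples $T,V$, so $\delta(T_1)=\delta(V^{\mathrm{sw}})$ (using that $\Delta_{N,\varphi}$ is closed under permuting the variables $\overline{z}_i$). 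Combining forces $\delta(V^{\mathrm{sw}})=\delta(V)$, contradicting \eqref{Eq_OrderSensitive}. Rebuild your case analysis around the substitution $i_\ell\mapsto i_{\ell+1}$ inside a tuple that still contains $j$, rather than around a direct comparison of $T$ with $V^{\mathrm{sw}}$.
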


\begin{proof}
 Conditions (i), (ii), and (iii) insure that the map $i_s \mapsto i_s$ for all $s \neq \ell, \ell+1$, $j \mapsto j$, and $i_\ell \mapsto i_{\ell+1}$ is an isomorphism of $S$-substructures.  Therefore, by $\Delta$-indiscernibility,
 \begin{equation}\label{Eq_jswap}
  \tp_\Delta(\overline{b}_{i_0}, ..., \overline{b}_{i_{\ell-1}}, \overline{b}_j, \overline{b}_{i_{\ell+1}}, \overline{b}_{i_{\ell+2}}, ..., \overline{b}_{i_N}) = \tp_\Delta(\overline{b}_{i_0}, ..., \overline{b}_{i_{\ell-1}}, \overline{b}_j, \overline{b}_{i_\ell}, \overline{b}_{i_{\ell+2}}, ..., \overline{b}_{i_N}).
 \end{equation}
 However, for tautological reasons,
 \[
  \models \delta(\overline{b}_{i_0}, ..., \overline{b}_{i_{\ell-1}}, \overline{b}_j, \overline{b}_{i_{\ell+1}}, \overline{b}_{i_{\ell+2}}, ..., \overline{b}_{i_N}) \vee \neg \delta(\overline{b}_{i_0}, ..., \overline{b}_{i_{\ell-1}}, \overline{b}_j, \overline{b}_{i_{\ell+1}}, \overline{b}_{i_{\ell+2}}, ..., \overline{b}_{i_N}).
 \]
 By \eqref{Eq_jswap}, we get
 \[
  \models \delta(\overline{b}_{i_0}, ..., \overline{b}_{i_{\ell-1}}, \overline{b}_j, \overline{b}_{i_\ell}, \overline{b}_{i_{\ell+2}}, ..., \overline{b}_{i_N}) \vee \neg \delta(\overline{b}_{i_0}, ..., \overline{b}_{i_{\ell-1}}, \overline{b}_j, \overline{b}_{i_{\ell+1}}, \overline{b}_{i_{\ell+2}}, ..., \overline{b}_{i_N}).
 \]
 If $i_0 < i_1 < ... < i_\ell < j < i_{\ell+1} < ... < i_N$ were order homogeneous, this would imply that
 \[
  \models \delta(\overline{b}_{i_0}, ..., \overline{b}_{i_{\ell-1}}, \overline{b}_{i_{\ell+1}}, \overline{b}_{i_\ell}, \overline{b}_{i_{\ell+2}}, ..., \overline{b}_{i_N}) \vee \neg \delta(\overline{b}_{i_0}, ..., \overline{b}_{i_{\ell-1}}, \overline{b}_{i_\ell}, \overline{b}_{i_{\ell+1}}, \overline{b}_{i_{\ell+2}}, ..., \overline{b}_{i_N}).
 \]
 contrary to \eqref{Eq_OrderSensitive}.
\end{proof}

As a corollary, for any such $i_0 \lhd ... \lhd i_N$ and $j \in P$, if $i \in P$ is such that $i_\ell \lhd i \lhd i_{\ell+1}$, then the formula
\begin{equation}\label{Eq_Thetaseparate}
 \theta(\overline{y}) = \delta(\overline{b}_{i_0}, ..., \overline{b}_{i_{\ell-1}}, \overline{y}, \overline{b}_{i_{\ell+1}}, \overline{b}_{i_{\ell+2}}, ..., \overline{b}_{i_N}) \wedge \neg \delta(\overline{b}_{i_0}, ..., \overline{b}_{i_{\ell-1}}, \overline{y}, \overline{b}_{i_{\ell}}, \overline{b}_{i_{\ell+2}}, ..., \overline{b}_{i_N})
\end{equation}
holds for $\overline{b}_i$ and fails for $\overline{b}_j$.  We use this to prove the following result for chains:

\begin{lem}\label{Lem_DefiningChain2}
 Under the assumption of Case 2, if $t < 2$ and $C \subseteq P^t$ is a chain, then there exists a uniform formula $\gamma_C(\overline{y})$, over at most $N \cdot (2(N+1)^2+N)$ elements of $\{ \overline{b}_i : i \in P \}$, such that
 \begin{itemize}
  \item [(i)] for all $i \in C$, $\models \gamma_C(\overline{b}_i)$, and
  \item [(ii)] for all $i \in P$, if $\models \gamma_C(\overline{b}_i)$, then $i \in P^t$.
 \end{itemize}
 That is, under Case 2, chains are roughly definable.
\end{lem}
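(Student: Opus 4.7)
I would closely mirror the Case 1 construction in Lemma \ref{Lem_DefiningChain}, replacing homogeneity with order homogeneity throughout, and using the order-sensitive formula $\theta$ of \eqref{Eq_Thetaseparate} instead of a generic $\Delta$-formula as the separating device. The parameter count $N \cdot (2(N+1)^2 + N)$ already matches Case 1, which is a strong hint that the bookkeeping of the construction should be identical.

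First I would apply Lemma \ref{Lem_ChainFix} to reduce to a single closed interval $C \subseteq [i, i']_P \subseteq P^t$, since this decomposition contributes the outer factor $N$ in the parameter bound. The case $|C| \le N$ is handled by $\gamma_C(\overline{y}) = \bigvee_{i \in C} \overline{y} = \overline{b}_i$, so assume $|C| > N$. Next I would follow the Case 1 scheme: classify each $j \in P^{1-t}$ into the three possibilities against $C$ (antichain-to-all; a $\lhd$-maximal $i^-_j \in C$ below $j$; or a $\lhd$-minimal $i^+_j \in C$ above $j$), associate to $j$ the set $C'_j$ of $2(N+1)$ $C$-consecutive neighbors of its distinguished element, initialize $C_{-1}$ with the $N+1$ $C$-initial and $N+1$ $C$-final elements, and inductively pick $j_n \in P^{1-t} \setminus \{j_0,\ldots,j_{n-1}\}$ such that $C_{n-1} \cup \{j_0, \ldots, j_n\}$ is order homogeneous under some linear extension of $\unlhd |_C$, setting $C_n = C_{n-1} \cup C'_{j_n}$. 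The key point is that pushing any element of $C_n \setminus C_{n-1}$ to a nearest neighbor in $C_{n-1}$ preserves the chosen linear order (since $C'_{j_n}$ is built from chain-consecutive neighbors), so order homogeneity propagates from $C_{n-1} \cup \{j_0, \ldots, j_n\}$ to $C_n \cup \{j_0, \ldots, j_n\}$. By Lemma \ref{Lem_OrderHomogenousColoring}, the process halts after $K \le N$ steps, yielding $C^* = C_{K-1}$ and $I_0 = \{j_0, \ldots, j_{K-1}\}$, with $|C^*| \le 2(N+1)^2$ and $|I_0| \le N$.

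Then, for each $j \in P^{1-t} \setminus I_0$, maximality of $I_0$ together with Lemma \ref{Lem_IncomparableOrdering} yields a sub-chain $i_0 \lhd \cdots \lhd i_N$ in $C^*$ and a $\delta \in \pm \Delta$ that is order-sensitive at some position $\ell$, so the formula $\theta(\overline{y})$ of \eqref{Eq_Thetaseparate} is witnessed on every element of $C$ lying strictly between $i_\ell$ and $i_{\ell+1}$ (and in particular on some $i^* \in C^* \setminus \{i_0, \ldots, i_N\}$) yet fails at $\overline{b}_j$. Assembling $\gamma_C$ as the conjunction of these separating formulas ranging over all witnessed tuples $(\delta, i_0, \ldots, i_{N-1})$ from $C^* \cup I_0$, combined with a disjunction of equalities $\overline{y} = \overline{b}_i$ for $i \in C^* \cap P^t$, then mirrors exactly the final formula construction in Lemma \ref{Lem_DefiningAntichain} and yields conditions (i) and (ii) of the lemma.

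The principal obstacle is that the three hypotheses of Lemma \ref{Lem_IncomparableOrdering} do not exhaust the possible relative positions of $j$ with respect to an arbitrary sub-chain $i_0 \lhd \cdots \lhd i_N$: a $j$ with $i_n \lhd j \lhd i_{n+1}$ for some $n \ne \ell$ is not directly covered. Overcoming this requires exploiting the freedom in selecting the sub-chain from the carefully constructed $C^*$ (together with maximality of $I_0$) to arrange that every remaining $j$ is effectively reduced to one of the three configurations Lemma \ref{Lem_IncomparableOrdering} handles, and then verifying that the resulting $\theta$-formula separates $\overline{b}_j$ from every $\overline{b}_i$ with $i \in C$ that we want to accept. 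This case analysis, rather than the formula assembly, is where the delicate work lies.
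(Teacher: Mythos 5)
Your plan correctly mirrors the overall structure of the paper's Case~2 argument: reduce via Lemma~\ref{Lem_ChainFix} to a single closed interval, run an inductive construction that adjoins elements $j_n \in P^{1-t}$ while maintaining order homogeneity, cut off after $N$ steps using Lemma~\ref{Lem_OrderHomogenousColoring}, and then assemble a uniform formula over a bounded parameter set. You also correctly identify the crux: Lemma~\ref{Lem_IncomparableOrdering} only covers a $j$ that sits above the top of the chain, below the bottom, or incomparable to all of it; it says nothing directly about a $j$ with $i_n \lhd j \lhd i_{n+1}$ for $n \neq \ell$. But identifying the obstacle is not the same as overcoming it, and here the proposal stops, deferring to unspecified ``delicate work.'' That deferred work is precisely the substance of the proof, so the proposal has a genuine gap.

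The paper's actual mechanism for the in-between case is not ``choose the sub-chain cleverly so that Lemma~\ref{Lem_IncomparableOrdering} applies.'' Instead, the inductive construction builds a pair $(X_n, \le_n)$ consisting of $C$ together with the added $j$'s and a linear order $\le_n$, subject to explicit placement constraints: at least $N+1$ elements of $C$ must lie between consecutive non-$C$ elements (condition (b)), and $\le_n$ must agree with $\unlhd$ on every pair $\{i, j\}$ where $i \in C$ and $j$ are $\unlhd$-comparable (conditions (c), (d)). When the construction halts, a remaining $j \in P^{1-t}$ that is comparable to elements of $C$ admits a natural extension of $\le^*$ satisfying (b), (c), (d); maximality then forces the order-homogeneity condition (a) to fail, and that failure directly produces a formula $\delta \in \pm\Delta$ and a bounded tuple from $C_0$ separating $\overline{b}_j$ from every $\overline{b}_i$ with $i$ in the relevant gap $G_n$. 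Lemma~\ref{Lem_IncomparableOrdering} is applied only to case~(i) (the fully incomparable $j$) and to the degenerate subcases of~(ii)/(iii) where $C_{n-1}^+ \not\subseteq I^-$ or $C_n^- \not\subseteq I^+$. In short, the engine driving the separation in the hard case is the failure of order homogeneity under a carefully constrained linear extension, not Lemma~\ref{Lem_IncomparableOrdering}; your proposal omits the placement constraints (b), (c), (d) that make this work, so it cannot be completed along the lines you sketch without importing this missing idea.
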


\begin{proof}
 As in the proof of Lemma \ref{Lem_DefiningChain}, we can assume $C \subseteq [i,i']_P \subseteq P^t$ for some $i, i' \in C$ and $|C| > N$.  Let $X_{-1} = C$ and $\le_{-1} = \unlhd |_C$.  Suppose that $n \ge 0$ and $(X_{n-1}; \le_{n-1})$ is constructed.  Suppose there exists $j \in P^{1-t}$ and $\le$ a linear order on $X_{n-1} \cup \{ j \}$ extending $\le_{n-1}$ such that
 \begin{itemize}
  \item [(a)] $(X_{n-1} \cup \{ j \}; \le)$ is order homogeneous,
  \item [(b)] for all $j^* \in (X_{n-1} - C) \cup \{ - \infty, \infty \}$, there exists at least $N+1$ elements from $C$ $\le$-between $j^*$ and $j$,
  \item [(c)] if $(\exists i \in C)(j \lhd i)$, then, for all $i \in C$, $j < i$ if and only if $j \lhd i$, and
  \item [(d)] if $(\exists i \in C)(i \lhd j)$, then, for all $i \in C$, $i < j$ if and only if $i \lhd j$.
 \end{itemize}
 Then set $X_n = X_{n-1} \cup \{ j \}$, and $\le_n = \le$.  If no such $j$ and $\le$ exists, then set $K = n$, $X^* = X_{K-1}$, and $\le^* = \le_{K-1}$ and the construction halts.  We claim that this construction halts after $N$ steps (i.e., $K \le N$).

 If not, fix $j_0 <^* ... <^* j_N$ from $X^* - C$ (this exists since $|X^* - C| = K > N$).  By construction, for each $n \le N$, there exists (at least $N+1$ many) $i_n$ such that $j_{n-1} \le i_n \le j_n$ (where $j_{-1} = - \infty$).  So we have $i_0 <^* j_0 <^* i_1 <^* ... <^* i_N <^* j_N$.  However, $i_n \in C \subseteq P^t$ and $j_n \in (X^* - C) \subseteq P^{1-t}$, so this contradicts Lemma \ref{Lem_OrderHomogenousColoring}.  So $K \le N$.  We now show how to define $\gamma_C$ from Lemma \ref{Lem_DefiningChain} using at most $2(N+1)^2+N$ elements from $C$.

 Let $C_{-1}^+$ be the $\lhd$-initial $N+1$ elements of $C$ and let $C_K^-$ be the $\lhd$-final $N+1$ elements of $C$.  Enumerate $X^* - C = \{ j_0, ..., j_{K-1} \}$ such that $j_0 <^* ... <^* j_{K-1}$ and let $C_n^-$ be the $\lhd$-final $N+1$ elements $i \in C$ so that $i <^* j_n$ and let $C_n^+$ be the $\lhd$-initial $N+1$ elements $i \in C$ so that $j_n <^* i$.  Finally, let $C_0 = C_{-1}^+ \cup \bigcup_{n < K} (C_n^- \cup C_n^+) \cup C_K^+$.  By construction, $|C_0| \le 2(N+1)^2$.  For each $n \le K$, consider
 \[
  G_n = \{ i \in C : (\exists i_{n-1} \in C_{n-1}^+, i_n \in C_n^-)(i_{n-1} \lhd i \lhd i_n) \},
 \]
 the gap of $C$ between $C_{n-1}^+$ and $C_n^-$.  It is clear that
 \[
  C = \bigcup_{n \le K} (C_{n-1}^+ \cup G_n \cup C_n^+),
 \]
 so, to define $\gamma_C$ over $C_0 \cup (X^* - C)$ (which has $\le 2(N+1)^2+N$ elements), we need only distinguish elements from $P^{1-t} - X^*$ and $G_n$ for each $n$.

 Fix $n \le K$ and $j \in P^{1-t} - X^*$.  As before, we have three cases to consider:
 \begin{itemize}
  \item [(i)] $\{ i, j \}$ is an antichain for all $i \in (C_{n-1}^+ \cup G_n \cup C_n^+)$,
  \item [(ii)] there exists $i \in (C_{n-1}^+ \cup G_n \cup C_n^+)$ such that $i \lhd j$, or
  \item [(iii)] there exists $i \in (C_{n-1}^+ \cup G_n \cup C_n^+)$ such that $j \lhd i$.
 \end{itemize}

 If (i) holds, then by Lemma \ref{Lem_IncomparableOrdering}, the formula $\theta(\overline{y})$ as in \eqref{Eq_Thetaseparate} separates $i \in G_n$ from $j$ as in case (i).

 If (ii) holds, then let $I^- = \{ i \in C : i \lhd j \}$ and let $I^+ = \{ i \in C : i \nlhd j \}$.  If $C_{n-1}^+ \not\subseteq I^-$, then fix $i_\ell \in C_{n-1}^+ - I^-$, $i_0 \lhd ... \lhd i_{\ell-1}$ from $C_{n-1}^+$ arbitrary such that $i_{\ell-1} \lhd i_\ell$, and $i_{\ell+1} \lhd ... \lhd i_N$ from $C_n^-$ arbitrary.  Again by Lemma \ref{Lem_IncomparableOrdering}, we see that the formula
 \[
  \delta(\overline{b}_{i_0}, ..., \overline{b}_{i_{\ell-1}}, \overline{b}_{i_{\ell+1}}, \overline{y}, \overline{b}_{i_{\ell+2}}, ..., \overline{b}_{i_N}) \vee \neg \delta(\overline{b}_{i_0}, ..., \overline{b}_{i_{\ell-1}}, \overline{b}_{i_\ell}, \overline{y}, \overline{b}_{i_{\ell+2}}, ..., \overline{b}_{i_N})
 \]
 holds of $\overline{b}_i$ for any $i \in G_n$ and fails for $\overline{b}_j$.  Therefore, we may assume $C_{n-1}^+ \subseteq I^-$.  Similarly, we may assume $C_n^- \subseteq I^+$.  Therefore, if we let $\le$ be the extension of $\le^*$ setting $i < j$ for all $i \in I^-$ and $j < i$ for all $i \in I^+$, conditions (b), (c), and (d) of the construction holds for $j$.  If (a) holds, then we contradict the fact that the construction halted, so we may assume that (a) fails.  Therefore, there exists $i_0 \lhd ... \lhd i_s$ from $(C_{n-1}^+ \cup G_n \cup C_n^-) \cap I^-$ and $i_{s+1} \lhd ... \lhd i_N$ from $(C_{n-1}^+ \cup G_n \cup C_n^-) \cap I^+$ so that
 \[
  \tp_\Delta(\overline{b}_{i_0}, ..., \overline{b}_{i_{s-1}}, \overline{b}_{i_s}, \overline{b}_{i_{s+1}}, ..., \overline{b}_{i_N}) \neq \tp_\Delta(\overline{b}_{i_0}, ..., \overline{b}_{i_{s-1}}, \overline{b}_j, \overline{b}_{i_{s+1}}, ..., \overline{b}_{i_N}).
 \]
 However, since $C_{n-1}^+ \subseteq I^-$ we may choose $i_0 \lhd ... \lhd i_{s-1}$ in $C_{n-1}^+$ (by $\Delta$-indiscernibility) and we may similarly choose $i_{s+1} \lhd ... \lhd i_N$ in $C_n^-$.  Also, this is witnessed by some $\delta \in \pm \Delta$.  Therefore, we have that
 \[
  \models \delta(\overline{b}_{i_0}, ..., \overline{b}_{i_{s-1}}, \overline{b}_i, \overline{b}_{i_{s+1}}, ..., \overline{b}_{i_N}) \wedge \neg \delta(\overline{b}_{i_0}, ..., \overline{b}_{i_{s-1}}, \overline{b}_j, \overline{b}_{i_{s+1}}, ..., \overline{b}_{i_N})
 \]
 for all $i \in G_n$.  Hence, this formula separates $i \in G_n$ and $j$ as in case (ii).  Case (iii) follows by symmetry.
\end{proof}

Now that we can roughly define chains, we have to deal with the space between two antichains.

\begin{lem}\label{Lem_DefiningAntichain2}
 Suppose that $A \unlhd A'$ two maximal antichains of $P$ and $t < 2$ are such that $|A^t| > N$ and $|(A')^t| > N$.  Then, there exists a uniform $\gamma_{A,A'}(\overline{y})$ over at most $2^{(2N+2)} \cdot (2N+2(N+1)^2)$ elements of $\{ \overline{b}_i : i \in P \}$ such that
 \begin{itemize}
  \item [(i)] for all $i \in [A,A']^t$, $\models \gamma_{A,A'}(\overline{b}_i)$, and
  \item [(ii)] for all $i \in P - [A,A']$, if $\models \gamma_{A,A'}(\overline{b}_i)$, then $i \in P^t$.
 \end{itemize}
 That is, $\gamma_{A,A'}$ roughly defines $[A,A']^t$ over $P - [A,A']$.
\end{lem}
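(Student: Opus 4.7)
The plan is to carve $[A,A']$ into a bounded number of uniformly-behaving cells, roughly define the color-$t$ part of each cell using the chain machinery of Lemma \ref{Lem_DefiningChain2}, and take the disjunction. To set up the cells, I would apply Lemma \ref{Lem_AntichainFix} to each of $A$ and $A'$ (both of which have minority color of size at most $N$, since $|A^t|,|(A')^t|>N$ forces $\Maj(A)=\Maj(A')=t$), producing bounded sets $J_0=A_0^{(A)}\cup J^-$ and $J_1=A_0^{(A')}\cup J^+$ of total size $O(N)$. By Lemma \ref{Lem_AntichainFix}, any $j\in P^{1-t}-[A,A']$ has its $\unlhd$-relationship with $[A,A']$ fully determined by whether $j\unlhd J_0$ or $J_1\unlhd j$. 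Then I would partition $[A,A']$ into cells $X_{J,J'}$ for $J\subseteq J_0$, $J'\subseteq J_1$, exactly as in the paragraph preceding Lemma \ref{Lem_HomogeneityofRegions}; bookkeeping parallel to Corollary \ref{Cor_BoundedAddition1} keeps the number of nontrivial cells within $2^{2N+2}$.

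For each cell with $X_{J,J'}^t\neq\emptyset$, I would pick a representative chain $C_{J,J'}\subseteq X_{J,J'}^t$ and invoke Lemma \ref{Lem_DefiningChain2} to obtain a formula $\gamma_{J,J'}$ using roughly $2(N{+}1)^2$ parameters that (a) holds on $C_{J,J'}$ and (b) forces any satisfier anywhere in $P$ to lie in $P^t$. The formula $\gamma_{A,A'}$ is then the disjunction of the $\gamma_{J,J'}$ over nonempty cells. Condition (ii) of the lemma is immediate from (b) because $\gamma_{J,J'}(\overline{b}_i)$ entails $i\in P^t$ regardless of which cell $i$ lies in. For condition (i), the point is to show that $\gamma_{J,J'}$ actually holds on \emph{all} of $X_{J,J'}^t$, not merely on the chosen chain $C_{J,J'}$. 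This should follow from the cell homogeneity of Lemma \ref{Lem_HomogeneityofRegions}: within a single cell, elements of the same color are indistinguishable by $\Delta$-tests up to bounded exchange, so the $\delta$-witnesses making up $\gamma_{J,J'}$ are stable under replacing a chain-argument by any other $i\in X_{J,J'}^t$.

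The main obstacle, and the place where care is required, is precisely this last step: showing that a rough chain-definition built inside one cell extends to every color-$t$ element of the cell. To make this go through, I would augment the representative chain $C_{J,J'}$ with an additional small ``witness set'' of up to $2N$ parameters (an antichain exemplar from within $X_{J,J'}$ together with the elements of $J_0\cup J_1$ needed to pin down the cell's identity), so that the order-sensitive $\delta$-tests used in the construction of Lemma \ref{Lem_DefiningChain2} are invariant under swapping in an arbitrary element of $X_{J,J'}^t$. This auxiliary set accounts for the $2N$ overhead in the final parameter count $2^{(2N+2)}\cdot(2N+2(N{+}1)^2)$, and once it is in place, conditions (i) and (ii) of the lemma follow by the homogeneity argument of Lemma \ref{Lem_HomogeneityofRegions} together with the rough-definability guarantee of Lemma \ref{Lem_DefiningChain2}.
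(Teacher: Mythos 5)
Your setup matches the paper's: applying Lemma \ref{Lem_AntichainFix} to $A$ and $A'$, forming $J_0$, $J_1$, and the $2^{2N+2}$ cells $X_{J,J'}$ is exactly what the paper does. But the heart of your proposal --- picking a single representative chain $C_{J,J'} \subseteq X_{J,J'}^t$, applying Lemma \ref{Lem_DefiningChain2} to it, and then arguing the resulting formula ``extends'' to all of $X_{J,J'}^t$ --- has a genuine gap. The cell $X_{J,J'}^t$ can contain arbitrarily large antichains (since $t$ is the majority color of both $A$ and $A'$, nothing bounds antichains of color $t$ inside $[A,A']$), so no single chain is representative of it. The formula from Lemma \ref{Lem_DefiningChain2} is built specifically around the linear order on $C_{J,J'}$ and its homogeneous extension $(X^*; \le^*)$; there is no mechanism by which it tests true on an element $i \in X_{J,J'}^t$ that is $\unlhd$-incomparable to most of $C_{J,J'}$. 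Your appeal to Lemma \ref{Lem_HomogeneityofRegions} doesn't close this: that lemma compares two \emph{antichains} of $[A,A']$ whose cell intersections match, and says nothing about interchanging a chain element with an arbitrary same-color element of the cell. The proposed ``witness set'' augmentation (an antichain exemplar plus pieces of $J_0 \cup J_1$) is not a construction that makes the order-sensitive $\delta$-tests from Lemma \ref{Lem_DefiningChain2} invariant under such a swap, and the $2N$ bookkeeping you attach to it is, in effect, reverse-engineered from the target parameter count rather than derived from an argument.

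The paper's proof handles each cell $X_{J,J'}^t$ by a different decomposition that sidesteps the problem entirely. It peels off roughly $N{+}1$ boundary levels $\Lev_s^-$ and $\Lev_s^+$ (each an antichain, hence roughly definable via Lemma \ref{Lem_DefiningAntichain}), leaving a middle piece $X^{**}$. Inside $X^{**}$ it fixes large antichains $A^-$ ($\lhd$-minimal) and $A^+$ ($\lhd$-maximal) of size $N{+}1$ (if these don't exist, $X^{**}$ really is a union of $\le N$ chains and Lemma \ref{Lem_DefiningChain2} does apply directly), together with maximal homogeneous extensions $I^\pm \subseteq P^{1-t} - [A,A']$ of $A^\pm$, and chains descending/ascending from $A^\pm$ through the peeled levels, assembling a parameter set $X_0$ of size $\le 2N + (N{+}1)^2$. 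Then for each $i \in X^{**}$ the argument splits: if $A^- \cup \{i\}$ (or $A^+ \cup \{i\}$) is an antichain, maximality of $I^-$ (or $I^+$) and Lemma \ref{Lem_HomogeneityofRegions} separate $i$ from any $j \in P^{1-t} - [A,A']$ over $X_0$; otherwise $A^- \lhd i \lhd A^+$, and the descending/ascending chains supply $i_0 \lhd \dots \lhd i_\ell \lhd i \lhd i_{\ell+1} \lhd \dots \lhd i_N$ inside $X_0$, at which point Lemma \ref{Lem_IncomparableOrdering} separates $i$ from $j$. The rough definition is then a bounded disjunction over $\Delta$-types over $X_0$. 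So the paper never reduces to chains inside a cell; it reduces to a bounded decomposition into antichains plus a sandwiched middle, and it is this sandwich structure --- not chain rough-definability --- that does the work. You would need to either adopt that decomposition or supply a concrete argument for why a chain-based formula covers the whole cell, and I don't see the latter going through.
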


\begin{proof}
 Fix $J_0$ and $J_1$ and $X_{J,J'}$ for each $J \subseteq J_0$ and $J' \subseteq J_1$ as in Lemma \ref{Lem_HomogeneityofRegions}.  Note that $|J_0| \le 2N+1$ and $|J_1| \le 2N+1$.  Fixing $J \subseteq J_0$ and $J' \subseteq J_1$, we now focus roughly defining $X_{J,J'}^t$ (and there are, at most, $2^{(2N+2)}$ of these sets).

 Let $A_s = \Lev_s^-(X_{J,J'}^t)$ for $s < \ell$, let $X^* = X_{J,J'}^t - \bigcup_{s \le \ell} A_s$, let $A_{N-s} = \Lev_s^+(X^*)$ for $s < N-\ell-1$, and let $X^{**} = X^* - \bigcup_{\ell + 1 < s \le N} A_s$.  Each of $A_s$ for $s \le N$ is roughly definable by $\gamma_{A_s}$ by Lemma \ref{Lem_DefiningAntichain}.  Therefore, if $X^{**} = \emptyset$, we are done, so suppose not.  Fix $A^-$ a $\lhd$-antichain of $X^{**}$ that is $\lhd$-minimal and $|A^-| = N+1$.  Similarly, fix $A^+$ a $\lhd$-antichain of $X^{**}$ that is $\lhd$-maximal and $|A^+| = N+1$.  If these do not exist, then $X^{**}$ is a union of $\le N$ chains, say $C_\ell$.  Use $\bigvee_\ell \gamma_{C_\ell}$ as in Lemma \ref{Lem_DefiningChain2} to roughly define $X^{**}$.  So we may assume $A^-$ and $A^+$ exist.  By choice of minimality of $A^-$, for any $i \in X^{**}$, either $A^- \cup \{ i \}$ is an antichain or $A^- \lhd i$.  The same holds for $A^+$ (except with the reverse ordering).

 Now fix $I^- \subseteq P^{1-t} - [A,A']$ maximal so that $A^- \cup I^-$ is homogeneous and fix $I^+ \subseteq P^{1-t} - [A,A']$ maximal so that $A^+ \cup I^+$ is homogeneous.  By Lemma \ref{Lem_HomogenousColoring}, $|I^-| \le N$ and $|I^+| \le N$.  For all $i \in A^-$, choose $\iota_{i,0} \lhd ... \lhd \iota_{i,\ell - 1} \lhd \iota_{i,\ell} = i$ from $X_{J,J'}^t$ (which exists by construction of $X^{**}$) and let $X^- = \{ \iota_{i,s} : i \in A^-, s \le \ell \}$.  Similarly construct $X^+$ with chains $i = \iota'_{i,\ell+1} \lhd ... \lhd \iota'_{i,N}$ for $i \in A^+$.  Let $X_0 = X^- \cup X^+ \cup I^- \cup I^+$ and notice that $|X_0| = 2N+(N+1)^2$.

 Now, fix any $j \in P^{1-t} - [A,A']$ and any $i \in X^{**}$.  We claim that $j$ does not have the same $\Delta$-type as $i$ over $X_0$.  That is, there exists $i_1, ..., i_N \in X_0$ distinct such that
 \[
  \tp_\Delta(\overline{b}_i; \overline{b}_{i_1}, ..., \overline{b}_{i_N}) \neq \tp_\Delta(\overline{b}_j; \overline{b}_{i_1}, ..., \overline{b}_{i_N}).
 \]
 First, if $A^- \cup \{ i \}$ is an antichain, then by Lemma \ref{Lem_HomogeneityofRegions}, $A^- \cup I^-$ is homogeneous if and only if $A^- \cup \{ i \} \cup I^-$ is homogeneous.  Therefore, if $j$ had the same $\Delta$-type as $i$ over $X_0$ ($X_0 \supseteq A^- \cup I^-$), then we would have that $A^- \cup \{ j \} \cup I^-$ is homogeneous, contrary to the maximality of $I^-$.  Thus they do not have the same $\Delta$-type.  We show this when $A^+ \cup \{ i \}$ is an antichain by symmetry.  So we may assume $A^- \lhd i \lhd A^+$.  Therefore, we have $i_0 \lhd ... \lhd i_\ell \lhd i \lhd i_{\ell+1} \lhd ... \lhd i_N$ with $i_0, ..., i_N \in X_0$ (by construction of $X_0$).  Hence, by Lemma \ref{Lem_IncomparableOrdering}, $j$ cannot have the same $\Delta$-type as $i$ over $X_0$.  Therefore, we can separate $i \in X^{**}$ from $j \in P^{1-t} - [A,A']$ with a formula over $X_0$.  Let
 \[
  \gamma_{X^{**}}(\overline{y}) = \bigvee_{i \in X^{**}} \bigwedge \{ \delta(\overline{y}, \overline{b}_{i_1}, ..., \overline{b}_{i_N}) : i_1, ..., i_N \in X_0, \models \delta(\overline{b}_i, \overline{b}_{i_1}, ..., \overline{b}_{i_N}) \}.
 \]
 Note that, \textit{a priori}, $\gamma_{X^{**}}$ ranges over arbitrarily many elements $i \in X^{**}$.  However, there are only boundedly many $\Delta$-types over $X_0$, so this is a uniform formula over $2N+(N+1)^2$ elements of $\{ \overline{b}_i : i \in P \}$.  By construction, for all $i \in X^{**}$, $\models \gamma_{X^{**}}(\overline{b}_i)$.  Furthermore, for all $j \in P^{1-t} - [A,A']$, $\models \neg \gamma_{X^{**}}(\overline{b}_j)$.  This gives the desired result.
\end{proof}

We now prove Theorem \ref{Thm_NIPPOIndisc} (i) $\Rightarrow$ (ii) under Case 2, completing the proof.

By Lemma \ref{Lem_IndiscBreakdown}, there exists $A_0 \lhd ... \lhd A_{K-1}$ for $K \le 2N+2$ maximal antichains of $P$ such that, for all $n \le K$ and all antichains $A \subseteq [A_{n-1}, A_n)$, $|A^{n (\mathrm{mod}\ 2)}| \le M$ (let $A_{-1} = - \infty$ and $A_K = \infty$).  For each $n \equiv 0 (\mathrm{mod}\ 2)$, let $A'_n \subseteq [A_{n-1}, A_n]$ be a maximal antichain of $P$ with $|(A'_n)^1| > M$ and choose $A'_n$ $\lhd$-maximal such.  For each $n \equiv 1 (\mathrm{mod}\ 2)$, for any antichain $A \subseteq (A'_{n-1}, A_n)^1$, $|A| \le M$ by construction.  Therefore, by Theorem \ref{Thm_BoundAntiChainBoundChain}, $(A'_{n-1}, A_n)^1 = \bigcup_{\ell < M} C_{n, \ell}$ for chains $C_{n,\ell} \subseteq P^1$.  Likewise, for $n \equiv 0 (\mathrm{mod}\ 2)$, $[A_{n-1}, A'_n]^0 = \bigcup_{\ell < M} C_{n, \ell}$ for chains $C_{n,\ell} \subseteq P^0$.  Let:
\[
 \psi(\overline{y}) = \bigvee_{n \equiv 0 (\mathrm{mod}\ 2)} \left( \gamma_{A_{n-1}, A'_n}(\overline{y}) \wedge \bigwedge_{\ell < M} \neg \gamma_{C_{n, \ell}}(\overline{y}) \right) \vee \bigvee_{n \equiv 1 (\mathrm{mod}\ 2), \ell < M} \gamma_{C_{n,\ell}}(\overline{y})
\]
for $\gamma_{A,A'}$ as in Lemma \ref{Lem_DefiningAntichain2} and $\gamma_C$ as in Lemma \ref{Lem_DefiningChain2}.  Then, $\models \varphi(\overline{a}; \overline{b}_i)$ if and only if $i \in P^1$ if and only if $i \in C_{n,\ell}$ for some $n \equiv 1 (\mathrm{mod}\ 2)$ and some $\ell < M$ or $i \in [A_{n-1}, A'_n] - \bigcup_{\ell < M} C_{n,\ell}$ for some $n \equiv 0 (\mathrm{mod}\ 2)$.  This holds if and only if $\models \psi(\overline{b}_i)$.  Since all of the formulas $\gamma_{A,A'}$ and $\gamma_C$ are uniform, $\psi$ is uniform.  This concludes the proof of Theorem \ref{Thm_NIPPOIndisc}.  As mentioned before, Theorem \ref{Thm_UDTFIPOS} follows as a corollary.

\section{Discussion}\label{Section_Discussion}


With Theorem \ref{Thm_UDTFIPOS} in hand, one is tempted to solve the general UDTFS Conjecture by the following means: Prove all finite sets can be made into a partial order indiscernible.  Unfortunately, there are simple examples to show that this is not true even when we assume that $\varphi$ has independence dimension $1$.

\begin{expl}\label{Expl_NotPartialOrderIndiscernible}
 Consider $X = 5 = \{ 0, 1, 2, 3, 4 \}$ and let
 \[
  Y = \{ \{ 0 \}, \{ 0, 1, 2 \}, \{ 2, 3, 4 \}, \{ 4 \} \},
 \]
 a subset of the powerset of $X$.  Let $R(x,y)$ be a binary relation that holds if and only if $x \in X$, $y \in Y$, and $x \in y$.  The relation $R(x;y)$ clearly has independence dimension $1$ but we claim that there is no partial order $\unlhd$ on $Y$ so that $\langle y : y \in Y \rangle$ is a $\Delta_{1,R}$-indiscernible sequence.  To see this, suppose there was such a $\unlhd$.  First notice that $\{ 0 \}$ and $\{ 0, 1, 2 \}$ cannot be an antichain since they are not homogeneous (i.e., $\tp_\Delta(\{ 0 \}, \{ 0, 1, 2 \}) \neq \tp_\Delta(\{ 0, 1, 2 \}, \{ 0 \})$ for $\Delta = \Delta_{1,R}$ since $\{ 0 \} \subseteq \{0, 1, 2 \}$ and not vice-versa).  Therefore, $\{ 0 \} \lhd \{ 0, 1, 2 \}$ or $\{ 0, 1, 2 \} \lhd \{ 0 \}$.  Now $\{ 0, 1, 2 \}$ and $\{ 2, 3, 4 \}$ must form an antichain as the $\Delta$-type of the pair is unequal to the $\Delta$-type of $( \{ 0 \}, \{0, 1, 2\})$ or $( \{ 0, 1, 2 \}, \{ 0 \})$.  Similarly, $\{ 0 \}$ and $\{ 4 \}$ must form an antichain, and therefore $\tp_\Delta( \{ 0, 1, 2 \}, \{ 2, 3, 4 \}) = \tp_\Delta( \{ 0 \}, \{ 4 \} )$.  However, this cannot hold; for example, the first pair intersect non-trivially while the second pair does not.  Therefore, this is a contradiction.
\end{expl}

The problem, of course, is distinguishing between the two types of incomparability when using the set-inclusion ordering.  Assuming independence dimension $\le 1$, if two sets are incomparable, then either they are disjoint or their union is the whole space.  This can be remedied by considering instead an index language $S = \{ \unlhd, E \}$ where $E$ is a binary relation symbol.  Then, one can use $E$ on incomparable elements to distinguish the two types of incomparability.  This leads to the following open question:

\begin{ques}
 Do all dependent formulas have uniform definability of types over indiscernible sequences indexed by finite $S$-structures $P$ so that $\unlhd^P$ is a partial order and $E^P$ is a symmetric binary relation on incomparable elements?
\end{ques}

An alternative solution is to only deal with formulas $\varphi$ of independence dimension $\le 1$ that are directed in the sense of \cite{Adler2008}.

Of course, as mentioned in the introduction, we would like to expand this notion of definability of types to even more general index structures.  For example:

\begin{ques}
 Do all dependent formulas have uniform definability of types over indiscernible sequences indexed by finite directed graphs?
\end{ques}

One problem with directed graphs is that, without transitivity, there is no notion of minimal elements.  All means of obtaining UDTFS both in this paper and in \cite{Mypaper2} use the fact that finite partial orders have minimal elements.  Thus it seems that an entirely new approach would be needed to answer the question for directed graphs.

\section*{Ackowledgements}\label{Sect_Ackowledgements}

We would like to thank Chris Laskowski for all of his helpful discussions with the author on the content of this paper.  The research for this paper was partially supported by Laskowski's NSF grants DMS-0600217 and 0901336.

\bibliographystyle{elsarticle-num}
\bibliography{Guingona004}

\end{document}